\newcommand\NoBlackBoxes{\global\overfullrule0pt}
\newcommand{\N}{\mathbb{N}}
\let\serieslogo@\relax
\let\@setcopyright\relax
\newtheorem{definition}{Definition}[section]
\newtheorem{theorem}[definition]{Theorem}
\newtheorem{proposition}[definition]{Proposition}
\newtheorem{lemma}[definition]{Lemma}
\newenvironment{remark}[1][Remark]{\begin{trivlist}
\item[\hskip \labelsep {\bfseries #1}]}{\end{trivlist}}
\renewcommand{\O}{\Omega}
\newcommand{\ind}{\operatorname{1}}
\numberwithin{equation}{section}
\begin{document}

\setcounter{page}{1}

\title[Equi-Energy sampling does not converge rapidly]{Equi-Energy sampling does not converge rapidly on the mean-field Potts model with three colors close to the critical temperature}

\author[Mirko Ebbers]{Mirko Ebbers}
\address[Mirko Ebbers]{Fachbereich Mathematik und Informatik,
Universit\"at M\"unster,
Einsteinstra\ss e 62,
48149 M\"unster,
Germany}

\email[Mirko Ebbers]{mirkoebbers@uni-muenster.de}

\author[Matthias L\"owe]{Matthias L\"owe}
\address[Matthias L\"owe]{Fachbereich Mathematik und Informatik,
Universit\"at M\"unster,
Einsteinstra\ss e 62,
48149 M\"unster,
Germany}

\email[Matthias L\"owe]{maloewe@uni-muenster.de}

\begin{abstract}
Equi-Energy Sampling (EES, for short) is a method to speed up the convergence of the Metropolis chain, when the latter is slow. We show that there are still models like the mean-field Potts model, where EES does not converge rapidly in certain temperature regimes. Indeed we will show that EES is slowly mixing on the mean-field Potts model, in a regime below the critical temperature. Though we will concentrate on the Potts model with three colors, our arguments remain valid for any number of colors $q \ge 3$, if we adapt the temperature regime. For the situation of the mean-field Potts model this answers a question posed in \cite{HuaKou}.
\end{abstract}

\thanks{Research of the second author was
funded by the Deutsche Forschungsgemeinschaft (DFG, German Research Foundation) under Germany 's Excellence Strategy
EXC 2044 –390685587, Mathematics M\"unster: Dynamics –Geometry -Structure}

\date{\today}

\subjclass[2010]{60J10,60K35, 82B05}

\keywords{Equi-Energy Algorithm, Potts model, Swapping Algorithm, Metropolis Algorithm, Markov Chain Monte Carlo methods, Curie-Weiss model}

\newcommand{\wlim}{\mathop{\hbox{\rm w-lim}}}
\newcommand{\na}{{\mathbb N}}
\newcommand{\re}{{\mathbb R}}
\renewcommand{\epsilon}{\varepsilon}

\newcommand{\vep}{\varepsilon}

\maketitle

\section{Introduction}
Sampling methods are of utmost importance in applied mathematics, e.g. in Bayesian statistics, computational physics, econometrics, or computational biology.
In many cases one wants to sample a random element drawn from a finite set $\Omega$ according to a probability distribution $\pi$ on $(\O, \mathcal P(\O))$. But even this problem may be less trivial than it sounds. Sometimes $\Omega$ may be finite, yet very large. E.g. when modeling a ferromagnetic material on $N$ atoms, $\O$ is of the form
$\{-1,+1\}^N$ and for real size systems, $N$ is of the order $10^{23}$, thus $|\O|$ is of the order $2^{{10}^{23}}$. Hence a straight-forward Monte-Carlo simulation would take exponentially long in the system size $N$ and thus would be much too expensive. In other situations, the size of $\O$ may not even be known, as e.g.\ in the so-called knapsack problem (\cite{LM01}).

One potential solution of this problem lies in the use of
Markov Chain Monte Carlo (MCMC, for short) algorithms.
They rely on an aperiodic and irreducible Markov chain on $\O$ that has $\pi$ as its invariant (i.e.\ stationary) distribution. One runs this Markov chain, stops it after some long enough time, and takes the current state as a sample element. The ergodic Theorem for Markov chains ensures that this element is almost distributed according to $\pi$, given one has waited long enough.
This method immediately raises two questions:
\begin{enumerate}
\item Can one find for each $\pi$ a Markov chain that converges in distribution to $\pi$? I.e.\ can one find for each $\pi$ an irreducible and aperiodic Markov chain that has $\pi$ as its invariant measure? This question is answered in the affirmative by the Metropolis-Hastings chain (see e.g. \cite{haeggstroem}).
\item How long do we need to wait to get a sample with a distribution that is reasonably close to $\pi$? If this waiting time is polynomial in the problem instance we speak about fast or rapid convergence, otherwise, in particular, if the mixing time is exponential in the problem instance, we will say the algorithm converges torpidly or slowly.
\end{enumerate}
It is, however, well known that
like the Glauber dynamics the Metropolis-Hastings algorithm usually converges slowly, when the target distribution is multi-modal.
Such situations occur e.g. in statistical physics in the presence of a phase transition. Hence slow convergence applies
to a number of interesting situations, among them the low temperature phase of the Curie-Weiss model (see e.g. the discussion in \cite{MosselSly}).
In the next section we will introduce a close relative of the Curie-Weiss model, the three state mean-field Potts model. This will be our test model for the EES to be introduced in Section 3. In Section 4 we will show that this sampler mixes slowly when applied to the (three state) mean-field Potts model in a certain temperature regime. Here, a key argument is based on a property of the mean-field Potts model that is closely related to its first order phase transition (see Lemma \ref{lemma loc min} and the remark below it): The limit point of the order parameter $m_N$ (cf. \eqref{mN}) at high temperatures remains a local maximum of its distribution also in a certain part of the low temperature regime. Hence a Metropolis-Hastings chain started in a neighborhood of this high temperature limit point will typically not escape from this neighborhood in polynomial time (in the part of the low temperature regime described above). But then also EES cannot improve the performance of the Metropolis-Hastings algorithm, because there are simply no observations of the global maxima of the distribution of $m_N$ the algorithm could jump at. The proof is completed by combining this observation with the very powerful technical argument of conductance, also known as Cheeger's inequality (see Theorem \ref{conductance}).


\section{The mean-field Potts model}
Let us now introduce the mean-field Potts model. Consider the space $\Omega=E^N$, where $E=\{1,2, \ldots,q\}$, $ q \in \N$, $q \ge 3$, and $N\in \na$ (to avoid some complications in the future, we can think of $N$ being a multiple of $q$). The elements of $E$ are sometimes referred to as colors.

For convenience, in this note, we will restrict to the case of $q =3$, i.e.\ the mean-field Potts model with three states or colors taken from the set
$E=\{1,2,3\}$. This from now on will be our standing assumption for the rest of the note. However, we remark that our argument remains valid for general $q \ge 3$, if one changes the regime of temperatures appropriately. We will come back to this remark later.

On $\O$ we construct an energy function given by
$$
H_N(\sigma):= \frac 1{2N} \sum_{i,j=1}^N \delta_{\sigma_i=\sigma_j},\quad \sigma \in \Omega.
$$
Here $\delta_A$ denotes the indicator function for an event $A$ (which is formally the Dirac measure for the event $A$, to stress that our notation is consistent with our later use of $\delta$).
Note that $H_N$ can be written as a function of the vector
\begin{multline}\label{mN}
m_N(\sigma):=(m^1_N(\sigma),m_N^2(\sigma),m_N^3(\sigma))\\:= \left(\frac 1N \sum_{i=1}^N \delta_{\sigma_i=1}, \frac 1N \sum_{i=1}^N \delta_{\sigma_i=2},\frac 1N \sum_{i=1}^N \delta_{\sigma_i=3}\right).
\end{multline}
Indeed, one easily checks that
$$
H_N(\sigma)= \frac N{2} \sum_{c=1}^3 (m_N^c(\sigma))^2.
$$
$m_N$ is therefore called an order parameter of the model.
With $H_N$ we associate a Gibbs measure $\pi_\beta$ at inverse temperature $\beta >0$, i.e.
$$
\pi_\beta(\sigma)= \frac{e^{\beta H(\sigma)}}{Z_\beta}, \quad \sigma \in \O.
$$
Here
$Z_\beta = \sum_\tau e^{\beta H(\tau)}$ is the partition function. Note that conventionally in statistical physics the energy function $H$ would carry an additional minus sign, and the Gibbs measure (as well as the partition function) would be defined in terms of the exponential of minus $\beta$ times that energy. Since the two minus signs would cancel and lead to the same definition of the Gibbs measure our Gibbs measure does not carry these conventional minus signs.

The mean-field Potts model was studied in a variety of papers. We refer to \cite{EllisWang} and \cite{Kestenschonman}, who showed that there is a critical inverse temperature $\beta_c$. This critical inverse temperature in the 3 states mean-field Potts model equals $\beta_c= 4\log 2$ (cf. \cite{Cuffetal}, which discusses the very interesting phenomenon of a temperature-dependent cut-off effect for the Glauber dynamics of the model).

At $\beta_c$ the model undergoes a first order phase transition. More precisely, the order parameter $m_N$ of the model converges in distribution to $\mathfrak{a}_0:=(\frac 13, \frac 13, \frac 13)$, when $\beta<\beta_c$.
At smaller temperatures one observes the following:
For $\beta \ge \beta_c$ there is $1 > m^*(\beta)> \frac 13$ and vectors $\mathfrak{a}_1(\beta), \mathfrak{a}_2(\beta), \mathfrak{a}_3(\beta) \in \re^3$ such that the vector $\mathfrak{a}_i(\beta)$ has $m^*(\beta)$ in its i'th component and all other components are equal such that they sum up to one.
For $\beta>\beta_c$ the distribution of $m_N$ converges to $\frac 13 \sum_{i=1}^3 \delta_{\mathfrak{a}_i(\beta)}$. Here $\delta$ denotes the Dirac-measure.
Finally, for $\beta=\beta_c$ there are, moreover, weights $\lambda_1, \lambda_2 >0$ that sum up to 1, such that the distribution of $m_N$ converges to $\lambda_1 \delta_{\mathfrak{a}_0}+ \lambda_2 \sum_{i=1}^3 \delta_{\mathfrak{a}_i(\beta_c)}$.

The phase transition is of first order, since $m^*(\beta_c) > \frac 13$, i.e. the jump is discontinuous. Moreover, the vector $\mathfrak{a}_0$ remains a local
maximum of the distribution of $m_N$ for some temperatures below the critical temperature.
Such a behavior can also be observed for general values of $q \ge 3$ at other values for $\beta_c$.
We will come back to this fact in Section 4, Lemmas \ref{lem1} and \ref{lemma loc min}, because it is of utmost importance for the proof of Theorem \ref{Centraltheorem} to be given in Section 4. 

\section{Equi-Energy Sampling}
Various modifications of the Metropolis-Hastings algorithm have been proposed to speed up its convergence. Among them the so-called swapping algorithm (see \cite{GeyerMCMCmaximumLikelihood}), the exchange Monte Carlo method (see \cite{exMC}), parallel tempering (see \cite{orlandini}) and the simulated tempering algorithm (see \cite{marinari_parisi}, \cite{GeyerThompsonAMCMC}, and \cite{madras}) are very popular in applications.
Another variant are Multicanonical Monte Carlo Simulations, introduced by \cite{BN92}, also see \cite{B00}. It is related to umbrella sampling (see \cite{umbrella}) and is
close in spirit to the swapping algorithm, simulated tempering, as well as EES. A major difference, however, is, how an a priori estimate of the probability distribution of interest is produced. Therefore, we have not yet been able to analyze so far, whether Multicanonical Monte Carlo Simulations suffer from the same shortcomings as swapping, parallel tempering or EES (see next paragraph).

In many situations the algorithms described in the previous paragraph seem indeed to be able to improve the convergence of the Metropolis chain, however, there are also negative theoretical results about these
algorithms. \cite{MadrasZhengCW} show that the swapping chain converges quickly for the Curie--Weiss model. On the other hand,  \cite{BhatnagarRandallTorpidMixingPotts} and \cite{BR16} prove that both, the swapping algorithm and simulated tempering, are slowly mixing for the 3-state Potts model and conjecture that this is caused by the first order phase transition in the Potts model (also see our discussion in the remark following Lemma \ref{lemma loc min}). Qualitative properties of the swapping algorithm and parallel tempering were studied in \cite{DDN18}.
A first rapid convergence result for the Swapping Algorithm in an disordered situation was proved in \cite{loewe_vermet_swap}. \cite{EbbersLoweREM} show that in disordered models the conjecture by Bhatnagar and Randall is not correct. They prove that the Swapping Algorithm mixes slowly on the Random Energy Model, even though this model has only a third order phase transition.
In the Blume-Emery-Griffiths model both, rapid or torpid mixing may occur as was shown in \cite{EKLV}.

Another idea to improve the performance of the Metropolis chain is the so called Equi-energy sampling algorithm (see e.g. \cite{KZW}). This model was tested on the Ising model in \cite{HuaKou} and the question, how fast it converges, was posed. For the Potts model, we will answer this question in the next section. In particular, we will show that EES may be slowly mixing on relevant models from statistical mechanics. Variants of EES were studied, among others, in \cite{Baragatti_etal}.

The principle observation to motivate EES is that a main obstacle to fast mixing is the presence of a phase transition in the model. This, in turn, may be characterized by a multi-modal distribution of a macroscopic observable. Usually, then the (projected) Metropolis chain enters one of the modes rapidly and stays there for an exponentially long time. The EES tries to avoid this behavior by introducing shortcuts in the state space. These shortcuts are created by the observations of Metropolis chains at higher temperatures where the above mentioned modes are less pronounced or possibly not even present. More precisely, additionally to the Metropolis steps one allows also for jumps to points of the same or a similar energy as the present one, given one has observed these points already at higher temperatures (otherwise, the algorithm would require the exact structure of the energy function, in which case simulations would probably be pointless). The EES has been discussed in \cite{KZW}, its convergence was shown in the same article, and, using a different technique, in \cite{AJDD}. We will now give an exact description of a version of this algorithm.

Let us first briefly recall the Metropolis-Hastings algorithm, which is the basis of the EES.
To define the first let $K_{gen}$ denote the following aperiodic, symmetric and irreducible Markov chain on $\Omega$:
\begin{equation} \label{Definition_Kgen}
K_{gen}(\sigma,\tau)=\left\{\begin{array}{ll}
\frac 12 & \mbox{if } \sigma=\tau\\
\frac 1 {4N} & \mbox{if } d_H(\sigma, \tau)=1 \\
0 & \mbox{otherwise.}
\end{array}
\right.
\end{equation}
Here $d_H$ is the Hamming distance and $K_{gen}$ is a Markov chain, because every $\sigma \in \O$ has $2N$ neighbors. Define the corresponding Metropolis-Hastings chain for the probability $\pi_\beta$
as $T_\beta(\cdot, \cdot)$:
\begin{equation} \label{Definition_Metropolis}
T_\beta(x,y)=\left\{\begin{array}{ll}
K_{gen}(x,y) & \mbox{if } x\neq y \mbox{ and } H(y) \ge H(x) \\
K_{gen}(x,y) \frac{\pi_\beta(y)}{\pi_\beta(x)} & \mbox{if } x\neq y \mbox{ and } H(y) < H(x) \\
1-\sum_{z\neq x} T_\beta(x,z) & \mbox{otherwise.}
\end{array}
\right.
\end{equation}
Note that $T_\beta(\cdot, \cdot)$ sometimes is slow in natural situations, e.g. when sampling from the low temperature distribution of the Curie-Weiss model (see e.g. \cite{MadrasPiccioni}, of course, the $T_\beta$ has to be adapted to the situation of the Curie-Weiss model).
To speed up its convergence, we consider the EES. To define it, we first introduce a sequence of energy levels:
$$
\inf_x H(x):= h_0 < h_1 < \ldots < h_M= \sup H(x)
$$
In our context, it is easily checked that
\begin{equation}\label{energy}
\inf_x H(x)=\frac N6 \quad \text{and} \quad \sup_x H(x)=\frac N2
\end{equation}
which will be used later.
Moreover, introduce a sequence of inverse temperature levels
$$
0=\beta_0 < \beta_1 < \ldots < \beta_M= \beta
$$
where we assume that $\beta$ is the temperature we want to sample from. It will often be convenient to take $\beta_i= i\frac{\beta}M$. Note that $M$ may and will depend on $N$, which is not made explicit in \cite{KZW}, otherwise our construction, so far, agrees with the construction in \cite{KZW}. We will make an explicit choice for $M$ and give reasons for this choice, after the description of the algorithm

For this, we will also need a dummy state $\iota$ and define
$\tilde \O := \O \cup \{\iota\}.$
Let $\mathcal M$ be an $(M+1) \times |\O|$ matrix over $\tilde \O$, which is initially filled with $\iota$, only.

The EES consists of alternations between two steps. One is a usual Metropolis step at a (random) temperature level $\beta_i$. The other one is an equi-energy jump at the same temperature, if $i\ge 1$. At inverse temperature 0 there are only Metropolis moves. We store the resulting energies of the states we see at temperature $\beta_i$ by entering them into the $i$'th row of the matrix $\mathcal M$, if the state has not been seen before. In this case it replaces one of the $\iota$'s (in a pre-described order). To explain the equi-energy step assume that the chain is at temperature $\beta_i, i\ge 1$ and in state $\sigma$. We determine the energy level $k$, such that $h_{k-1} < H(\sigma) \le h_k$ and choose (with equal probabilities) a state $\tau$ from all states $\tau'$
with $h_{k-1} < H(\tau') \le h_k$, which we have already seen at temperature level $\beta_{i-1}$. This new state is accepted with probability $\min\{1, \frac{\pi_{\beta_i}(\tau) \pi_{\beta_{i-1}}(\sigma)}{ \pi_{\beta_i}(\sigma) \pi_{\beta_{i-1}}(\tau)}\}$. Otherwise, especially, if we have not seen any state in the same energy band in the $i-1$st row of $\mathcal M$, the chain stays where it is. We denote the corresponding transition matrix (on $\O$) by $Q_i$. Note that $Q_i$ in general depends on time. We will not make this explicit, because we will just analyze the algorithm in the ''best case scenario'', where the matrix $\mathcal M$ does not contain any $\iota$'s anymore. However, under this assumption, we will still be able to show that EES is slowly converging on the three state mean-field Potts model in a certain temperature regime. Formally
\begin{equation}\label{Qi}
Q_i(\sigma, \tau):=Q_{n,i}(\sigma, \tau):= \frac{1}{|B_{n,k}|}\min\{1, \frac{\pi_{\beta_i}(\tau) \pi_{\beta_{i-1}}(\sigma)}{ \pi_{\beta_i}(\sigma) \pi_{\beta_{i-1}}(\tau)}\}\ind_{\tau \in B_{n,k}}.
\end{equation}
Here $n$ is the time variable and $B_{n,k}$ is the set of states $\tau'$
with $h_{k-1} < H(\tau') \le h_k$, which we have already seen at temperature level $\beta_{i-1}$ by time $n$.

One might expect, that we indeed use all states we have seen previously, rather than the ones we explored with the chain at temperature $\beta_{i-1}$. However, there is hardly any difference between the two chains, because if temperatures are very different the chains will typically also see states of very different energies. Our choice has the advantage that it is easy to see that the global chain to be described below is reversible and moreover, it agrees with the choice in the literature, see \cite{KZW}.

Based on this, we build a matrix that describes the movement of all particles simultaneously. This operator $\mathcal{R}$ will be a matrix on $\O^{M+1}$, of course. We lift the movement of the i'th particle to $\O^{M+1}$ by building
$$\mathcal{Q}_i:= \bigotimes_{j=0}^{i-1} I \otimes Q_i \bigotimes_{j=i+1}^M I$$
where $I$ is the identity matrix. Similarly, we consider the matrix $\mathcal{T}_i$ that lifts the Metropolis step $T_{\beta_i}$ to $\O^{M+1}$, i.e. we consider
$$\mathcal{T}_i:= \bigotimes_{j=0}^{i-1} I \otimes T_{\beta_i} \bigotimes_{j=i+1}^M I.$$
Combining these operators the EES is defined by
$$\mathcal{R}=\frac{1}{(M + 1)^3} \sum_{j,k,l=0}^M \mathcal{Q}_j\mathcal{T}_k\mathcal{Q}_l.$$
Note that the versions of the EES given in \cite{KZW} and \cite{AJDD} differ from each other and also our version is slightly different from those. However, the spirit of the algorithms is the same.

In the sequel, we will only consider a number of energy levels $M$ that depends linearly on $N$, such that $M=dN$. We will furthermore assume that $h_i$ are equi-distant. Indeed, this choice of $M$ is somewhat arbitrary, allowing for a polynomial dependence between $M$ and $N$ would not alter the algorithm much. However,
choosing $M$, e.g. exponentially large in $N$, would lead to empty, or almost empty energy
bands which would make the equi-energy step obsolete. Moreover, it would obviously lead to exponential relaxation times (in $N$), because exponentially many temperatures have to be simulated. On
the other hand, having $M$ too small, e.g.\ constant, leads to almost non-interactive components (i.e.\ an equi-energy jump is almost never accepted)
and EES stands no chance of increasing the
speed of convergence compared to the standard Metropolis algorithm.

Of course, eventually we will only be interested in the $M+1$'st coordinate of this Markov chain. However, studying it entirely, seems easier. First of all, let us note that indeed, the distribution of the $M+1$'st coordinate converges to $\pi_\beta$.
\begin{theorem}\label{convergence result}
The distribution of the $M+1$'st coordinate converges to $\pi_\beta$ as time tends to infinity.
\end{theorem}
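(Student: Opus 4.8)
The plan is to reduce the assertion to the standard convergence theorem for finite, irreducible, aperiodic Markov chains. Working in the best-case scenario described above, where the matrix $\mathcal{M}$ no longer contains any $\iota$ and the sets $B_{n,k}$ have stabilised to the full energy bands, the operator $\mathcal{R}$ becomes time-homogeneous and acts on the finite state space $\O^{M+1}$. I would first exhibit a stationary distribution, namely the product measure $\Pi := \bigotimes_{j=0}^M \pi_{\beta_j}$, where $\pi_{\beta_0}$ is the uniform distribution on $\O$ (recall $\beta_0=0$). Once $\Pi$ is shown to be stationary and $\mathcal{R}$ is irreducible and aperiodic, the ergodic theorem yields $\mu\mathcal{R}^n \to \Pi$ for every initial law $\mu$, and projecting onto the last coordinate gives convergence of the $(M+1)$-st marginal to $\pi_{\beta_M}=\pi_\beta$, as claimed.

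The heart of the argument is to show that $\Pi$ is not merely stationary but reversible for $\mathcal{R}$, which I would establish by proving that each building block is self-adjoint on $L^2(\O^{M+1},\Pi)$. For the Metropolis factors this is immediate: $T_{\beta_k}$ is reversible with respect to $\pi_{\beta_k}$ by the Metropolis-Hastings construction \eqref{Definition_Metropolis}, and tensoring with identities on the remaining coordinates preserves self-adjointness, so each $\mathcal{T}_k$ is self-adjoint with respect to $\Pi$. For the equi-energy factors one checks the detailed balance relation $\pi_{\beta_i}(\sigma)\,Q_i(\sigma,\tau)=\pi_{\beta_i}(\tau)\,Q_i(\tau,\sigma)$ for states $\sigma,\tau$ lying in a common energy band: the acceptance probability $\min\{1,\pi_{\beta_i}(\tau)\pi_{\beta_{i-1}}(\sigma)/(\pi_{\beta_i}(\sigma)\pi_{\beta_{i-1}}(\tau))\}$ is exactly the Hastings correction for a proposal drawn within the band according to the neighbouring lower-temperature law $\pi_{\beta_{i-1}}$, so the usual Metropolis-Hastings computation gives reversibility of $Q_i$ with respect to $\pi_{\beta_i}$, hence self-adjointness of $\mathcal{Q}_i$ with respect to $\Pi$. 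With every $\mathcal{Q}_i$ and $\mathcal{T}_k$ self-adjoint, the symmetric sandwiching in $\mathcal{R}=(M+1)^{-3}\sum_{j,k,l}\mathcal{Q}_j\mathcal{T}_k\mathcal{Q}_l$ does the rest: taking adjoints gives $(\mathcal{Q}_j\mathcal{T}_k\mathcal{Q}_l)^*=\mathcal{Q}_l\mathcal{T}_k\mathcal{Q}_j$, and relabelling $j\leftrightarrow l$ in the sum shows $\mathcal{R}^*=\mathcal{R}$. Thus $\mathcal{R}$ is reversible with respect to $\Pi$, and in particular $\Pi$ is stationary.

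It remains to secure irreducibility and aperiodicity. Here the Metropolis factors carry the load. Every $T_{\beta_k}$ is irreducible on $\O$, being built from the single-site-flip chain $K_{gen}$, while every $Q_i$ and every $T_{\beta_k}$ has strictly positive holding probability (for $T_{\beta_k}$ this comes from $K_{gen}(\sigma,\sigma)=\tfrac12$). Consequently, in a product $\mathcal{Q}_j\mathcal{T}_k\mathcal{Q}_l$ there is positive probability that the two equi-energy factors do nothing and only the middle Metropolis factor acts, so the one-step kernel of $\mathcal{R}$ dominates a positive multiple of each $\mathcal{T}_k$; irreducibility of the $T_{\beta_k}$ then lifts to irreducibility of $\mathcal{R}$ on $\O^{M+1}$, and the positive diagonal gives aperiodicity. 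It is worth noting that the equi-energy operators $Q_i$ can never change the energy band of a configuration, so they cannot by themselves render the chain irreducible; it is essential that the argument rests on the $\mathcal{T}_k$.

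I expect the main obstacle to be twofold. The delicate point in the reversibility step is the detailed balance of the equi-energy move $\mathcal{Q}_i$: one must be careful that the uniform-looking normalisation $|B_{n,k}|^{-1}$ in \eqref{Qi} is to be read through the empirical proposal law of the level-$(i-1)$ chain, which in the best-case regime coincides with $\pi_{\beta_{i-1}}$ restricted to the band, for it is precisely this identification that makes the acceptance ratio the correct Hastings correction and $\pi_{\beta_i}$ the reversing measure of $Q_i$. The second, more conceptual, point is the justification of passing to the time-homogeneous best-case regime at all: one should argue that, almost surely, irreducibility of the Metropolis dynamics fills every entry of $\mathcal{M}$ in finite time, so that from some random time onward the chain evolves according to the homogeneous, reversible operator $\mathcal{R}$ analysed above, whence the convergence statement follows.
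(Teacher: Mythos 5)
Your proposal is correct in substance and follows essentially the same route as the paper's own proof: pass to the regime where the matrix $\mathcal{M}$ is completely filled, establish reversibility of the Metropolis and equi-energy building blocks with respect to the product measure $\prod_{i=0}^M \pi_{\beta_i}$, and conclude with the convergence theorem for Markov chains (the paper phrases this via the enlarged chain $\mathcal{S}$ on $\O^{M+1}\times\mathfrak{M}$ and the absorbing state $M_0$, which matches your ``from some random time onward'' reduction).

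Two places where you go beyond the paper's write-up deserve comment. First, you actually verify the hypotheses of the convergence theorem: the paper's proof never addresses irreducibility or aperiodicity, whereas your domination argument (the equi-energy factors hold with positive probability, so $\mathcal{R}$ dominates a positive multiple of the random-scan Metropolis chain $\frac{1}{M+1}\sum_k \mathcal{T}_k$) and your $j\leftrightarrow l$ relabelling, which shows that the individually non-self-adjoint products $\mathcal{Q}_j\mathcal{T}_k\mathcal{Q}_l$ sum to a self-adjoint operator, supply exactly the missing steps. Second, the point you flag as delicate is in fact a genuine discrepancy in the paper: with the proposal read literally as uniform on the band, as in \eqref{Qi}, detailed balance of $Q_i$ with respect to $\pi_{\beta_i}$ fails for $i\ge 2$. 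Indeed, since $\pi_\beta(\sigma)\propto e^{\beta H(\sigma)}$, one computes $Q_i(\sigma,\tau)/Q_i(\tau,\sigma)=e^{(\beta_i-\beta_{i-1})(H(\tau)-H(\sigma))}$, so the reversing measure on each band is proportional to $e^{(\beta_i-\beta_{i-1})H}$, and this agrees with $\pi_{\beta_i}$ only if $H$ is constant on the band; here the bands have width $1/(3d)$ while the energy spectrum has spacing of order $1/N$, so they are not level sets of $H$. Your resolution---reading the proposal as the level-$(i-1)$ law $\pi_{\beta_{i-1}}$ restricted to the band, equivalently storing visited states with multiplicity as in the original Kou--Zhou--Wong algorithm---is precisely what turns the acceptance ratio into a Hastings correction and restores exact reversibility with respect to $\pi_{\beta_i}$. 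The paper asserts this reversibility of $Q_i$ without comment and thus tacitly relies on the same reading; your version makes explicit the one point at which the statement, with \eqref{Qi} taken literally, would otherwise need either this reinterpretation or a modified invariant measure.
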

\begin{proof}
This is the content of \cite{AJDD} for their version of the EES. For our version the assertion follows from the ergodic theorem for Markov chains. Indeed, denote by $\mathcal{S}$ the Markov chain on $\O^{M+1}\times \mathfrak{M}$, where $\mathfrak{M}$ is the space of all $(M+1) \times 3^N$ matrices. $\mathcal{S}$ will behave in its first component like $\mathcal{R}$ while in the second component we keep record of the filling of $\mathcal{M}$.
Observe that each $T_i:=T_{\beta_i}$ is reversible with respect to $\pi_{\beta_i}$ and $\mathcal{M}$ does not play any role for it. On the other hand, once we reach a situation where $\mathcal{M}$ is entirely filled with states different from $\iota$ (we denote this state of $\mathcal{M}$ by $M_0$ in the second coordinate of $\mathcal{S})$, i.e.\ we have seen all states at all temperatures, also all the equi-energy steps $Q_i$ are reversible with respect to $\pi_{\beta_i}$. This is, because $Q_i(\sigma, \tau)>0$, if and only if, $\sigma$ and $\tau$ lie in the same energy band and follows from the construction of the transition probabilities.
Thus, once $M_0$ is reached -- which happens almost surely in finite time -- $\mathcal{S}$ is reversible with respect to $$
\pi:=\prod_{i=0}^M \pi_{\beta_i} \times \delta_{M_0}.$$
Then the convergence follows from the convergence theorem for Markov chains. This, in particular, yields the assertion of the theorem.
\end{proof}

The proof is somewhat misleading, as it seems to indicate, that for exponentially large state spaces there is no hope that EES may converge in polynomial time, since first the state $M_0$ has to be reached. However, if we consider the high temperature situation $\beta <\beta_c$ in the Potts model the Metropolis-Hastings chain converges to its invariant distribution in polynomial time, even without any equi-energy steps. 

On the other hand, we will see in the next section that in part of the low temperature regime $\beta > \beta_c$ the situation is even worse. Even, when we start $\mathcal{S}$ in the optimal state $M_0$ in its second component, i.e.\ when we assume the second component is already in $M_0$, the mixing time may be exponential.

\section{Torpid mixing of EES on the low temperature mean-field Potts model}
We now come to the central result of the note.
\begin{theorem}\label{Centraltheorem}EES is slowly mixing on the 3-state mean-field Potts model, when $\beta_c<\beta< 3$, even when the second component of the Markov chain $\mathcal{S}$ introduced in the proof of Theorem \ref{convergence result} above is in state $M_0$.
\end{theorem}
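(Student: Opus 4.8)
The plan is to invoke the powerful conductance bound of Theorem~\ref{conductance}: it suffices to exhibit a \emph{single} set $A \subseteq \O^{M+1}$ with $\pi(A) \le \tfrac12$ whose conductance
$$
\Phi(A) := \frac{\sum_{x \in A,\, y \notin A} \pi(x)\, \mathcal{R}(x,y)}{\pi(A)}
$$
is exponentially small in $N$, for then Cheeger's inequality forces the spectral gap, and hence the mixing time of $\mathcal{R}$, to be exponentially large. Since we work in state $M_0$, the product measure $\pi=\prod_{i=0}^M \pi_{\beta_i}$ is stationary and each $Q_i$ acts only on the $i$-th coordinate, so I would define $A$ through the energy of the coldest coordinate alone,
$$
A := \bigl\{ (\sigma^0,\dots,\sigma^M) \in \O^{M+1} : H(\sigma^M) \le h_{k^*} \bigr\},
$$
where $h_{k^*}$ is the energy level closest to the saddle energy separating $\mathfrak{a}_0$ from the $\mathfrak{a}_j(\beta)$. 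Writing $\pi_\beta(\{m_N=m\}) \asymp \exp(N f_\beta(m))$ via Stirling, with the free-energy functional $f_\beta(m)=\tfrac{\beta}{2}|m|^2-\sum_c m^c\log m^c$, and recalling that $H=\tfrac N2|m|^2$ depends on $m$ only through $|m|^2$, the set $A$ is exactly the metastable well $\{|m_N|^2 \le 2h_{k^*}/N\}$ around $\mathfrak{a}_0$; as $\pi(A)$ is exponentially small it certainly satisfies $\pi(A)\le\tfrac12$.

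The structural heart of the argument is that the equi-energy mechanism cannot create a shortcut across the barrier. Each $Q_i$ moves $\sigma^i$ only within its own energy band $\{h_{k-1}<H\le h_k\}$, so once $h_{k^*}$ is chosen to coincide with a band boundary, no equi-energy jump of coordinate $M$ ever crosses $\partial A$, while jumps of the other coordinates leave $H(\sigma^M)$ untouched. Consequently the only transitions from $A$ to $A^c$ are Metropolis moves $\mathcal{T}_M$ of the last coordinate, and a single spin flip changes $H$ by at most $O(1)$. Hence the boundary flux is controlled by the $\pi_\beta$-mass sitting within $O(1)$ of the saddle energy,
$$
\sum_{x\in A,\,y\notin A}\pi(x)\,\mathcal{R}(x,y) \;\le\; \mathrm{poly}(N)\,\pi_\beta\bigl(\{\,H \in (h_{k^*}-C,\,h_{k^*}]\,\}\bigr),
$$
and the book-keeping that one step of $\mathcal{R}=\frac{1}{(M+1)^3}\sum_{j,k,l}\mathcal{Q}_j\mathcal{T}_k\mathcal{Q}_l$ moves the last coordinate by at most $O(1)$ in energy (two band-preserving jumps and one spin flip) is then routine.

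It remains to compare this flux with $\pi(A)$. By a Laplace estimate, with $f^\ast=\max_m f_\beta(m)=f_\beta(\mathfrak{a}_j)$ and $g(r):=\max_{|m|^2=r}f_\beta(m)$, one obtains $\pi_\beta(A)\asymp \exp(-N(f^\ast-g(\tfrac13)))$, since on the relevant range of $r$ the function $g$ attains its maximum at the local maximum $r=\tfrac13$ belonging to $\mathfrak{a}_0$; likewise the flux is $\asymp \exp(-N(f^\ast-g(r_s)))$ with $r_s$ the saddle value. Therefore
$$
\Phi(A) \;\le\; \mathrm{poly}(N)\,\exp\!\Bigl(-N\bigl(g(\tfrac13)-g(r_s)\bigr)\Bigr).
$$
The exponent $g(\tfrac13)-g(r_s)$ is strictly positive precisely because, for $\beta_c<\beta<3$, the point $\mathfrak{a}_0$ is a genuine local maximum of $f_\beta$ lying strictly above the saddle: here $\beta<3$ guarantees the negative-definite Hessian at $\mathfrak{a}_0$ and $\beta>\beta_c$ guarantees that the $\mathfrak{a}_j$ are the global maxima. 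This is exactly the content of Lemmas~\ref{lem1} and \ref{lemma loc min}, so $\Phi(A)$ is exponentially small and Theorem~\ref{conductance} yields the assertion.

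The main obstacle is making rigorous the claim that conserving energy is a genuine obstruction, namely that the low-energy well of $\mathfrak{a}_0$ (at $|m|^2=\tfrac13$) and the strictly higher-energy wells of the $\mathfrak{a}_j$ lie in \emph{distinct} bands, so that any passage between them is forced through the exponentially light saddle band by Metropolis steps alone. Technically this rests on (i) uniform Stirling estimates turning $\pi_\beta$ of an energy shell into $e^{Ng(r)}$ with controlled subexponential errors, and (ii) the quantitative barrier $g(\tfrac13)>g(r_s)$, which is where the restriction $\beta<3$ is indispensable and which I would extract from Lemmas~\ref{lem1} and \ref{lemma loc min}. I expect (ii), the strict positivity of the barrier throughout the open regime $(\beta_c,3)$, to be the delicate point, since it is exactly the metastability of $\mathfrak{a}_0$ that both enables and limits the equi-energy dynamics.
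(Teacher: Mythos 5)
Your proposal is correct in its essentials and follows the same skeleton as the paper's proof: apply the conductance bound of Theorem~\ref{conductance} to the chain with matrix coordinate frozen at $M_0$ (so that $\tilde\pi=\prod_i\pi_{\beta_i}$ is reversible), take as cut set the metastable well of $\mathfrak{a}_0$ seen only through the coldest coordinate, use the fact that equi-energy jumps preserve energy bands to show they cannot carry flux across the cut, and extract the exponentially small conductance from the strict local maximum of $f$ at $\mathfrak{a}_0$ (Lemma~\ref{lemma loc min}, where $\beta<3$ enters), with $\pi(A)\le\frac12$ coming from $\beta>\beta_c$. Where you differ is the implementation of the cut, and your variant is arguably cleaner: since on the simplex $H_N=\frac N2\|m_N\|_2^2=\frac N2\bigl(\frac13+\|m_N-\mathfrak{a}_0\|_2^2\bigr)$, your sublevel set $\{H(\sigma^M)\le h_{k^*}\}$ is indeed a ball around $\mathfrak{a}_0$, and by aligning $h_{k^*}$ with a band boundary you make $A$ a union of complete energy bands, so no $\mathcal{Q}_j$ can ever cross $\partial A$; only the single Metropolis factor $\mathcal{T}_M$ inside $\mathcal{Q}_j\mathcal{T}_k\mathcal{Q}_l$ can, and it moves $H$ by at most $O(1)$. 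The paper instead cuts along an order-parameter ball $\mathfrak{B}^{M+1}_\vep$ and needs Lemma~\ref{lemsigmatau} together with the nested radii $\delta<\delta'<\delta''<\vep$ to control the composition of two equi-energy jumps with a Metropolis step in between; your band-alignment trick replaces that bookkeeping.

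There is, however, one genuine (though easily repaired) flaw: you place $h_{k^*}$ at the saddle energy and assert that $g(r)<g(\frac13)$ on the whole range up to the saddle value $r_s$, claiming this follows from Lemmas~\ref{lem1} and~\ref{lemma loc min}. It does not. Lemma~\ref{lemma loc min} is purely local (a negative-definite Hessian at $\mathfrak{a}_0$), and Lemma~\ref{lem1} only rules out \emph{interior} local maxima other than $\mathfrak{a}_0$ and the $\mathfrak{a}_j$; together they show that the maximum of $f$ over the closed ball of saddle radius is attained either at $\mathfrak{a}_0$ or on the bounding sphere, but they do not exclude that the sphere through the saddle carries points where $f$ exceeds $f(\mathfrak{a}_0)$ (the saddle height is the critical value of the minimax path, not the maximum of $f$ restricted to that sphere). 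The repair is exactly what the paper does: there is no need to push the cut out to the saddle. Choose instead $h_{k^*}=\frac N6+\frac N2\rho$, rounded to a band boundary, for a fixed small $\rho>0$ such that $f<f(\mathfrak{a}_0)$ on $\{0<\|m-\mathfrak{a}_0\|_2^2\le\rho\}$ -- this is precisely what the strict local maximum property gives. With that choice your flux estimate (the $O(1)$-thick energy shell at the cut has exponentially less mass than the well, by the Stirling expansion~\eqref{stirling}) and the bound $\pi(A)\le\frac12$ (the global maxima $\mathfrak{a}_j(\beta)$ lie outside $A$ when $\beta>\beta_c$) both go through, and Theorem~\ref{conductance} yields the claim.
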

We will prepare the proof of the theorem by explaining the ideas and stating some lemmas.
In the proof of the theorem we will exploit one of the main differences between the mean-field Potts model and the Curie-Weiss model (i.e.\ when $E=\{1,2\}$) at low temperatures.
This difference lies in the fact, that in the Curie-Weiss model the state where both colors occur equally often is a local minimum of the Gibbs measure at all low temperatures, while it is a local maximum of the Gibbs measure in the mean-field Potts model for some temperatures in the low temperature regime (also see Lemma \ref{lemma loc min}). In particular, in the Curie-Weiss model, the Gibbs measure is flat in this state at the critical temperature while it exposes a local maximum in this state at the critical temperature in the Potts model. Thus, in the latter, EES will be very reluctant to move far away from a state $\sigma$ with $m_N(\sigma) \approx (\frac 13, \frac 13, \frac 13)$.
This is the core idea, even though the technical steps are somewhat more involved.

Let $c_1, c_2, c_3$ be numbers in $[0,1]$ that add up to 1 and such that $c_i N$ is an integer for each $i=1,2,3$. Then for $\sigma_c$ such that $m_N(\sigma_c)=(c_1, c_2, c_3)=:c$ we have that
\begin{eqnarray*}
\pi_\beta(m_N(\sigma)=c)&=& \binom{N}{Nc_1, Nc_2, Nc_3} \frac{\exp(\beta H_N(\sigma_c))}{Z_\beta}\nonumber\\
\end{eqnarray*}
\begin{eqnarray}\label{stirling}
&=& \frac 1N \exp(-N(\sum_{i=1}^N c_i \log c_i +\Delta(c)))\frac{\exp(\beta H_N(\sigma_c))}{Z_\beta}\nonumber\\
&=& \frac{\exp(N(f(c)+\Delta(c)))}{N Z_\beta}
\end{eqnarray}
where
$$f(c)= \sum_{i=1}^3 \left(\frac \beta 2 c_i^2-c_i \log c_i\right)$$
and $\Delta (c)$ is $o(1)$. Note that we used Stirling's formula to derive the second equality in \eqref{stirling} and the fact that we can rewrite $H_N(\sigma)$ as $H_N(\sigma)= \frac 12 \sum_{i=1}^3 c_i^2$, if $m_N(\sigma)=c$.

Letting $\mathcal{C}:= \{(c_1,c_2,c_3) \in [0,1]^3: \sum_{i=1}^3 c_i=1\}$ to be the domain of $f$ (and the set of all probabilities on the space $E$), Gore and Jerrum show:
\begin{lemma}(cf.\cite[Proposition 1]{GJ99})\label{lem1}
Let $c$ be a local maximum of $f$. Then $c$ satisfies:
\begin{enumerate}
\item  $c$ lies in the interior of $\mathcal{C}$.
\item Either $a_i= \frac 13$ for all $i=1,2,3$, or there are $0< \alpha < \frac 1 \beta < \alpha' <1$, such that $a_i \in \{\alpha, \alpha'\}$ for all $i=1,2,3$. In the latter case there is exactly one $a_i$ equal to $\alpha'$, while all the other $a_j, j \neq i$ are equal to $\alpha$.
\end{enumerate}
\end{lemma}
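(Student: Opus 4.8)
The plan is to treat this as a constrained optimization problem on the simplex $\mathcal{C}$ and to extract the stated structure from the first- and second-order conditions for a maximum of $f$.

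First I would establish part (1), that any local maximum lies in the interior, arguing by contradiction. Suppose $c$ lies on the boundary, so that some coordinate, say $c_3$, vanishes. Moving mass into this coordinate along the path $\epsilon \mapsto (c_1 - \tfrac\epsilon2, c_2 - \tfrac\epsilon2, \epsilon)$ (or an analogous path at a vertex), the only non-smooth contribution to $f$ comes from the entropy term $-\epsilon\log\epsilon$, whose $\epsilon$-derivative $-\log\epsilon - 1$ tends to $+\infty$ as $\epsilon \downarrow 0$, while every remaining term contributes a bounded derivative. Hence $f$ is strictly increasing as one enters the interior, contradicting local maximality; this forces $c_i > 0$ for all $i$.

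Working now in the interior, I would impose the first-order (Lagrange) condition for the constraint $\sum_i c_i = 1$. Since $\partial f/\partial c_i = \beta c_i - \log c_i - 1$, stationarity means $g(c_i) = \lambda$ for all $i$ with a common multiplier $\lambda$, where $g(x) := \beta x - \log x - 1$. I would then analyze $g$: from $g'(x) = \beta - 1/x$ it is strictly decreasing on $(0, 1/\beta)$ and strictly increasing on $(1/\beta, \infty)$, with a unique minimum at $x = 1/\beta$. Consequently $g(x) = \lambda$ has at most two solutions, one value $\alpha \le 1/\beta$ and one value $\alpha' \ge 1/\beta$, so the three coordinates take at most two distinct values straddling $1/\beta$. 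If the two values coincide we are in the symmetric case $c_i = 1/3$; otherwise $0 < \alpha < 1/\beta < \alpha' < 1$, where $\alpha > 0$ comes from interiority and $\alpha' < 1$ because at least one coordinate equals $\alpha > 0$, so $\alpha' \le 1 - \alpha < 1$.

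It remains to pin down the multiplicities, that is, to show exactly one coordinate equals $\alpha'$, and this is the step I expect to be the crux, since it is the only place the second-order information (a maximum, not merely a critical point) enters. The Hessian of $f$ is diagonal with entries $\partial^2 f/\partial c_i^2 = \beta - 1/c_i$, and since the simplex is affine the restricted Hessian is just this form on the tangent space $\{t : \sum_i t_i = 0\}$; a local maximum therefore requires $\sum_i (\beta - 1/c_i)\, t_i^2 \le 0$ for all such $t$. In a hypothetical configuration with two coordinates equal to the large value $\alpha'$, I would test the admissible direction that perturbs these two coordinates against each other while fixing the third, e.g.\ $t = (0, 1, -1)$; this yields $\sum_i (\beta - 1/c_i)\, t_i^2 = 2(\beta - 1/\alpha') > 0$ since $\alpha' > 1/\beta$, contradicting the maximum condition. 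Hence the large value can occur at most once, and in a genuinely two-valued configuration it occurs exactly once, with the other two coordinates equal to $\alpha$. The main obstacle, and the only non-routine point, is precisely this second-order argument; the interiority and Lagrange steps are standard calculus once the entropy singularity and the monotonicity of $g$ are observed.
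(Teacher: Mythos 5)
Your proof is correct. Note, though, that the paper offers no proof of this lemma at all --- it is imported from Gore and Jerrum (\cite[Proposition 1]{GJ99}) --- so there is no internal argument to compare against; what you have written is a self-contained substitute for the citation, and it follows the natural constrained-optimization route: interiority because the derivative of the entropy term $-\epsilon\log\epsilon$ blows up as $\epsilon\downarrow 0$ while all other terms contribute bounded derivatives; the stationarity condition $\beta c_i-\log c_i-1=\lambda$ combined with strict convexity of $g(x)=\beta x-\log x-1$ (unique minimum at $1/\beta$) to force the coordinates to take at most two values; and the second-order test along the tangent direction $(0,1,-1)$ to rule out two coordinates at the larger value $\alpha'$. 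You rightly single out that last step as the crux, since it is the only place where maximality, rather than mere stationarity, enters, and the computation $2(\beta-1/\alpha')>0$ is exactly right. Two details you should make explicit in a final write-up, both already implicit in your argument: (i) when the coordinates genuinely take two distinct values, the inequalities $\alpha<1/\beta<\alpha'$ are automatically \emph{strict}, because two distinct preimages of $\lambda$ under the strictly convex $g$ must lie on opposite sides of its unique minimizer; this strictness is precisely what makes $\beta-1/\alpha'>0$ in the Hessian computation, so it deserves a sentence of its own. (ii) Your case split ``the two values coincide / otherwise'' should be phrased in terms of the number of distinct values actually taken by the coordinates: even if $g=\lambda$ has two distinct roots, all three coordinates may sit on the same root, and then the constraint $\sum_i c_i=1$ forces $c_i=1/3$, i.e.\ the symmetric case. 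Neither point is a gap; your argument proves the lemma as stated.
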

Analyzing the function $f$ around the point $(\frac 13, \frac 13, \frac 13)$ we find that (in accordance with Lemma \ref{lem1}) it might be a local but not a global maximum of $f$, if $\beta>\beta_c= 4 \log 2$ is not too large (a similar observation was already made in \cite{Kestenschonman}):
\begin{lemma}\label{lemma loc min}
If $4 \log2 = \beta_c \le \beta <3$ then $(\frac{1}{3}, \frac{1}{3}, \frac{1}{3})$ is a local mode of $\pi_\beta$, if $N$ is large enough, i.e. $(\frac{1}{3}, \frac{1}{3}, \frac{1}{3})$ is a local maximum point of $\pi_\beta$.
\end{lemma}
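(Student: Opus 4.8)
The plan is to reduce the statement to a second-order analysis of the rate function
$f(c)=\sum_{i=1}^3\bigl(\tfrac\beta2 c_i^2-c_i\log c_i\bigr)$ on the simplex $\mathcal{C}$ at the center $c_0:=(\tfrac13,\tfrac13,\tfrac13)$, show that $c_0$ is a strict local maximum of $f$ exactly when $\beta<3$, and then transfer this to the finite-$N$ distribution of $m_N$ using the exact representation \eqref{stirling}. By Lemma \ref{lem1}, $c_0$ is one of the two admissible types of critical point of $f$, so once one checks (directly) that the gradient vanishes there, only the Hessian test remains.

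First I would eliminate the constraint $\sum_i c_i=1$ by writing $c_3=1-c_1-c_2$ and setting $g(c_1,c_2):=f(c_1,c_2,1-c_1-c_2)$. A short computation of $\nabla g$ confirms that $c_0$ is a critical point. Differentiating once more and evaluating at $c_1=c_2=\tfrac13$ yields
$$
\operatorname{Hess} g(c_0)=\begin{pmatrix} 2\beta-6 & \beta-3 \\ \beta-3 & 2\beta-6 \end{pmatrix}=(\beta-3)\begin{pmatrix} 2 & 1 \\ 1 & 2 \end{pmatrix}.
$$
Since $\left(\begin{smallmatrix} 2 & 1 \\ 1 & 2\end{smallmatrix}\right)$ is positive definite (eigenvalues $1$ and $3$), the Hessian is negative definite if and only if $\beta-3<0$. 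Hence for every $\beta<3$ — and in particular for $\beta_c\le\beta<3$, as $\beta_c=4\log 2<3$ — the point $c_0$ is a strict local maximum of $f$ on $\mathcal{C}$. Note that this argument uses only $\beta<3$; the lower bound $\beta_c$ merely identifies the regime of interest, in which $c_0$ ceases to be a global maximum.

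It then remains to pass from the limiting function $f$ to the measure $\pi_\beta$, i.e.\ to ensure that the $o(1)$ error $\Delta(c)$ in \eqref{stirling} does not destroy the local-maximum property on the lattice values actually taken by $m_N$. The cleanest way I would do this is to compare $c_0$ (a genuine lattice point, since $N$ is a multiple of $3$) directly with each of its six nearest neighbors $c_0+\tfrac1N(e_i-e_j)$, $i\ne j$, which are precisely the values of $m_N$ reachable by recoloring a single site. Such a move multiplies the multinomial coefficient by $\tfrac{N/3}{N/3+1}$ and raises $H_N=\tfrac N2\sum_i c_i^2$ by exactly $\tfrac1N$, so that
$$
\frac{\pi_\beta\bigl(m_N=c_0+\tfrac1N(e_i-e_j)\bigr)}{\pi_\beta(m_N=c_0)}=\frac{N/3}{N/3+1}\,e^{\beta/N},
$$
whose logarithm is $(\beta-3)/N+O(N^{-2})$. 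For $\beta<3$ this is negative for all large $N$, so $c_0$ strictly dominates each neighbor and is therefore a local mode of $\pi_\beta$, as asserted.

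The main obstacle is exactly this last transfer. The continuous Hessian computation controls only $f$, yet neighboring lattice points differ by $O(1/N)$ while $f$ enters \eqref{stirling} multiplied by $N$; thus the second-order term and the uncontrolled correction $\Delta$ sit at the same scale, and the continuous argument alone is not quite conclusive. The exact neighbor ratio sidesteps the Stirling error entirely, and the only point that needs care is verifying that the $O(N^{-2})$ remainder cannot overturn the sign of the leading term $(\beta-3)/N$ when $\beta\ne 3$, which indeed holds once $N$ is large enough.
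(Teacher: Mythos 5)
Your argument is correct, and its first half is essentially the paper's own proof in different clothing: the paper also reduces the lemma, via \eqref{stirling}, to showing that $f$ has a strict local maximum at $\mathfrak{a}_0=(\frac13,\frac13,\frac13)$, but it verifies this along one-dimensional sections, setting $h(x)=f(\frac13+x,\frac13-ax,\frac13-(1-a)x)$ and checking $h'(0)=0$, $h''(0)=-(6-2\beta)(a^2-a+1)<0$ for $\beta<3$ (the paper prints $a^2+a+1$, an immaterial discrepancy since both quadratics are positive on $[0,1]$). Your constrained Hessian $(\beta-3)\left(\begin{smallmatrix}2&1\\1&2\end{smallmatrix}\right)$ carries exactly the same information: its quadratic form evaluated at the direction $(1,-a)$ equals $2(\beta-3)(a^2-a+1)$, which is precisely the paper's $h''(0)$. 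Where you genuinely depart from the paper is in the transfer from $f$ to $\pi_\beta$. The paper dispatches this with the single sentence ``In view of \eqref{stirling} it suffices to analyze $f$'', and you correctly identify that this is the only delicate point: for lattice neighbours of $\mathfrak{a}_0$ the difference $N(f(c)-f(\mathfrak{a}_0))$ is itself $O(1/N)$, i.e.\ of the same order as the error hidden in $\Delta$, so the continuum maximum of $f$ alone does not decide the comparison at the lattice scale. Your exact one-spin-flip ratio $\frac{N/3}{N/3+1}\,e^{\beta/N}$, with logarithm $(\beta-3)/N+O(N^{-2})$, settles exactly this scale rigorously, sidesteps Stirling entirely, and makes transparent why $\beta=3$ is the true threshold; this is a point the paper's proof simply glosses over.

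One refinement would make your proof self-contained at all scales: the neighbour comparison covers displacements of order $1/N$ and the Hessian covers fixed $\vep$-balls, but lattice points at intermediate distances are, strictly speaking, covered by neither statement as written. This is easily repaired by running your exact computation for a general displacement $k/N$ with $\sum_i k_i=0$: the multinomial ratio has logarithm $-\frac{3}{2N}\sum_i k_i^2\,(1+o(1))$, while $H_N$ increases by exactly $\frac{1}{2N}\sum_i k_i^2$, so the log-ratio is $\frac{\beta-3}{2N}\sum_i k_i^2\,(1+o(1))<0$ uniformly over $\max_i|k_i|\le\vep N$ for small $\vep$. This subsumes both of your regimes at once and in fact renders the continuum Hessian step unnecessary. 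As it stands, your proposal already establishes everything the paper's one-line proof does, and more.
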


\begin{proof}
In view of \eqref{stirling} is suffices to analyze $f$. For $x>0$, and $a \in [0,1]$ consider
$h(x):=f(\frac{1}{3}+x, \frac{1}{3}-ax, \frac{1}{3}-(1-a)x))$. It is easy matter to check that $h'(0)=0$ and $h''(0)= -(6-2\beta)(a^2+a+1)$. The assertion follows.
\end{proof}

\begin{remark}
Lemma \ref{lemma loc min} is a main reason why Theorem \ref{Centraltheorem} is true. It is not difficult to check that the same behavior is true for general $q \ge 3$ in an appropriately chosen temperature regime (depending on $q$). Therefore, also Theorem \ref{Centraltheorem} could be proven for general $q \ge 3$. Indeed, the property shown in Lemma \ref{lemma loc min} is intrinsically related to the first order phase transition of the mean-field Potts model. Such a phase transition can be characterized by the discontinuous transition of the accumulation point(s) of an order parameter of the model at the critical inverse temperature $\beta_c$. In the Potts model this order parameter is the variable $m_N$. However, in most natural models, these new accumulation point(s) are already local maxima of the distribution of the order parameter for some smaller values of $\beta$. Similarly, the old accumulation point(s) remain local maxima of the distribution of the order parameter for some larger values of $\beta$. This is exactly the statement of Lemma \ref{lemma loc min}.
\end{remark}

\medskip
Another key ingredient of the proof is a conductance argument (also known as Cheeger's inequality in \cite{DiaconisStrook_GeometricBoundsMC})
\begin{theorem}(\cite{JerrumSinclair})\label{conductance}
Let $P$ be a Markov chain
on a finite set $S$. Assume it is reversible with respect to $\pi$. For all $S' \subseteq S$, define
$$
\Phi_{S'} =\frac{\sum_{x \in S', y\notin S' }\pi(x)P(x,y)}{\pi(S')}.
$$
The conductance $\Phi$ given by
$$
\Phi = \min_{S':\pi(S')\le \frac 12} \Phi_{S'}.
$$
Then the following holds true for the spectral gap $\Gamma(P)$ of $P$:
$$ \frac{\Phi^2}2 \le \Gamma(P) \le  2\Phi.$$
\end{theorem}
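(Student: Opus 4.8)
The plan is to prove the two inequalities $\Gamma(P)\le 2\Phi$ and $\Phi^2/2\le\Gamma(P)$ separately, using the variational (Rayleigh–Ritz) description of the spectral gap of a reversible chain. Throughout I would work with the Dirichlet form and the weighted inner product
\[
\Ec(f,f)=\tfrac12\sum_{x,y\in S}\pi(x)P(x,y)(f(x)-f(y))^2,\qquad \langle f,g\rangle_\pi=\sum_{x\in S}\pi(x)f(x)g(x).
\]
Reversibility makes $P$ self-adjoint on $\ell^2(\pi)$, so its eigenvalues are real, the largest equals $1$ with constant eigenvector, and $\Gamma(P)=1-\lambda_2$ with $\lambda_2$ the second-largest eigenvalue. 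The starting point is the identity $\Gamma(P)=\min\{\Ec(f,f)/\langle f,f\rangle_\pi:\langle f,\mathbf{1}\rangle_\pi=0,\ f\neq 0\}$ together with $\Ec(f,f)=\langle (I-P)f,f\rangle_\pi$.

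For the upper bound I would simply feed a test function into the Rayleigh quotient. Given $S'$ with $\pi(S')\le\frac12$, take $f=\mathbf{1}_{S'}-\pi(S')$, which is $\pi$-orthogonal to constants. A short computation gives $\langle f,f\rangle_\pi=\pi(S')(1-\pi(S'))\ge\pi(S')/2$; meanwhile only boundary edges contribute to $\Ec(f,f)$, and reversibility collapses this to $\Ec(f,f)=\sum_{x\in S',\,y\notin S'}\pi(x)P(x,y)=\Phi_{S'}\pi(S')$. Hence the Rayleigh quotient is at most $2\Phi_{S'}$, and minimizing over $S'$ yields $\Gamma(P)\le 2\Phi$. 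This direction is routine.

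The harder direction is the Cheeger bound $\Phi^2/2\le\Gamma(P)$, which is where I expect the real work. Let $\varphi$ be an eigenfunction for $\lambda_2$; replacing $\varphi$ by $-\varphi$ if necessary, I arrange $\pi(\{\varphi>0\})\le\frac12$. The first sub-step is to pass to $f=\varphi_+$ and show $\Ec(f,f)\le\Gamma(P)\langle f,f\rangle_\pi$; writing $f=\varphi+\varphi_-$, this follows from $(I-P)\varphi=\Gamma(P)\varphi$, from $\langle\varphi,\varphi_+\rangle_\pi=\langle f,f\rangle_\pi$, and from $\langle (I-P)\varphi_-,\varphi_+\rangle_\pi=-\langle P\varphi_-,\varphi_+\rangle_\pi\le 0$, the last holding because $P\ge 0$ entrywise and $\varphi_\pm$ have disjoint supports. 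The second, crucial sub-step is a co-area estimate: ordering the states by decreasing $f$ and using the super-level sets $S_t=\{f^2>t\}$, each of $\pi$-measure $\le\frac12$ and hence with boundary flow $\ge\Phi\,\pi(S_t)$, integration over $t$ (via $\int_0^\infty\pi(S_t)\,dt=\langle f,f\rangle_\pi$) gives $\sum_{x,y}\pi(x)P(x,y)|f(x)^2-f(y)^2|\ge 2\Phi\langle f,f\rangle_\pi$. Factoring $f(x)^2-f(y)^2=(f(x)-f(y))(f(x)+f(y))$ and applying Cauchy–Schwarz, together with $(f(x)+f(y))^2\le 2(f(x)^2+f(y)^2)$, produces the matching upper bound $\sum_{x,y}\pi(x)P(x,y)|f(x)^2-f(y)^2|\le 2\sqrt{2}\,\sqrt{\Ec(f,f)}\,\sqrt{\langle f,f\rangle_\pi}$. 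Combining the two and squaring gives $\Phi^2\langle f,f\rangle_\pi\le 2\Ec(f,f)$, i.e. $\Ec(f,f)/\langle f,f\rangle_\pi\ge\Phi^2/2$; chaining with the first sub-step yields $\Phi^2/2\le\Gamma(P)$.

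The main obstacle is orchestrating this second direction: ensuring the truncation $\varphi_+$ retains a good Rayleigh quotient, and then setting up the level-set decomposition so that the conductance bound $\Phi\,\pi(S_t)$ can be inserted and integrated cleanly. The Cauchy–Schwarz step that linearizes $f(x)^2-f(y)^2$ is the decisive trick making the quadratic Dirichlet form comparable to the linear boundary flow, and it is the heart of the argument. As this is the classical result of \cite{JerrumSinclair} (see also \cite{DiaconisStrook_GeometricBoundsMC}), I would otherwise follow their treatment and only record the steps needed for our application.
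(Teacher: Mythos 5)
Your attempt cannot be compared against an internal argument, because the paper gives none: Theorem \ref{conductance} is imported as a known result from \cite{JerrumSinclair} (see also \cite{DiaconisStrook_GeometricBoundsMC}) and is used purely as a black box --- in fact only the direction $\Gamma(P)\le 2\Phi$, evaluated on explicit bad cuts, enters Propositions \ref{slowmixPotts} and \ref{condest}. Judged on its own, your sketch is the classical Jerrum--Sinclair argument and it is correct. The easy direction is as you say: with $f=\mathbf{1}_{S'}-\pi(S')$ one has $\langle f,f\rangle_\pi=\pi(S')(1-\pi(S'))\ge\pi(S')/2$, and reversibility symmetrizes the boundary sum so that $\Ec(f,f)=\sum_{x\in S',y\notin S'}\pi(x)P(x,y)=\Phi_{S'}\,\pi(S')$, giving $\Gamma(P)\le 2\Phi_{S'}$ for every admissible $S'$. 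The hard direction is also assembled correctly: the truncation step works because $\Ec(\varphi_+,\varphi_+)=\langle(I-P)\varphi,\varphi_+\rangle_\pi+\langle(I-P)\varphi_-,\varphi_+\rangle_\pi$, where the first term equals $\Gamma(P)\langle\varphi_+,\varphi_+\rangle_\pi$ and the second is $-\langle P\varphi_-,\varphi_+\rangle_\pi\le 0$ by entrywise nonnegativity; the level sets $S_t=\{\varphi_+^2>t\}$ all lie inside $\{\varphi>0\}$, hence have $\pi$-mass at most $\tfrac12$ after your sign normalization, so the conductance bound may be inserted under the integral; and the co-area identity combined with Cauchy--Schwarz and $(f(x)+f(y))^2\le 2\left(f(x)^2+f(y)^2\right)$ yields $2\Phi\langle f,f\rangle_\pi\le 2\sqrt{2}\,\sqrt{\Ec(f,f)\,\langle f,f\rangle_\pi}$, i.e.\ $\Phi^2/2\le\Ec(f,f)/\langle f,f\rangle_\pi\le\Gamma(P)$. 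Two small points are worth recording, since the theorem as stated in the paper is silent on them: the inequality holds for $\Gamma(P)=1-\lambda_2$ with $\lambda_2$ the second largest eigenvalue (your reading), not for the absolute spectral gap, as conductance says nothing about eigenvalues near $-1$; and the normalization $\pi(\{\varphi>0\})\le\tfrac12$ together with $\varphi_+\neq 0$ uses that $\pi$ has full support, which is implicit in the statement. Neither affects the paper's application.
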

As follows e.g.\ from \cite{DiaconisStrook_GeometricBoundsMC} a spectral gap that is the inverse of a polynomial in the problem instance results in fast mixing of the Markov chain. On the other hand, if the spectral gap is the inverse of an exponential in the problem instance, the Markov chain mixes slowly.
An immediate consequence of Theorem \ref{conductance} is that the Metropolis algorithm alone is slowly mixing on the low temperature Potts model.
\begin{proposition}\label{slowmixPotts}
The Metropolis algorithm mixes slowly on the Potts model, if $\beta >\beta_c$.
\end{proposition}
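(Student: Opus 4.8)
The plan is to apply the conductance bound of Theorem \ref{conductance} to the Metropolis chain $T_\beta$, which is reversible with respect to $\pi_\beta$, and to exhibit a single set $S'$ whose conductance is exponentially small. A natural bottleneck is the ``majority colour $1$'' set
$$
S' := \{\sigma \in \Omega : m_N^1(\sigma) > m_N^2(\sigma) \text{ and } m_N^1(\sigma) > m_N^3(\sigma)\}.
$$
By the permutation symmetry of $H_N$ in the three colours the three analogous sets carry equal $\pi_\beta$-mass while the ties $\{m_N^i=m_N^j\}$ carry vanishing mass, so $\pi_\beta(S')\to\tfrac13$, and in particular $\pi_\beta(S')\le\tfrac12$ for $N$ large, as required by the definition of $\Phi$.

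First I would bound the numerator of $\Phi_{S'}$. Since $T_\beta(x,y)\le K_{gen}(x,y)$ and a single Metropolis move changes exactly one coordinate of $\sigma$, only configurations $x\in S'$ whose empirical vector $m_N(x)=c$ lies within $O(1/N)$ of one of the tie surfaces $\{c_1=c_2\ge c_3\}$ or $\{c_1=c_3\ge c_2\}$ can have a neighbour outside $S'$; moreover the total Metropolis mass leaving any $x$ is at most $\tfrac12$. Grouping these boundary configurations by the value $c$ of $m_N$, and using that $\pi_\beta(x)$ depends on $x$ only through $m_N(x)$ together with the Stirling expansion \eqref{stirling}, gives
$$
\sum_{x\in S',\,y\notin S'}\pi_\beta(x)T_\beta(x,y)\;\le\;\tfrac12\sum_{c\in B_N}\pi_\beta(m_N=c)\;\le\;\frac{\mathrm{poly}(N)}{N Z_\beta}\,e^{N(f_{\mathrm{bd}}+o(1))},
$$
where $B_N$ is the $O(N)$-element set of lattice points near the tie surfaces and $f_{\mathrm{bd}}:=\max_B f$ with $B:=\{c\in\mathcal{C}:c_1=c_2\ge c_3\}\cup\{c_1=c_3\ge c_2\}$. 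Evaluating $\pi_\beta(S')$ from below at a lattice point nearest to $\mathfrak{a}_1(\beta)$ gives $\pi_\beta(S')\ge\mathrm{poly}(N)^{-1}(NZ_\beta)^{-1}e^{N(f_{\max}+o(1))}$ with $f_{\max}=\max_{\mathcal{C}}f$. The factor $NZ_\beta$ cancels, leaving $\Phi_{S'}\le\mathrm{poly}(N)\,e^{-N(f_{\max}-f_{\mathrm{bd}}+o(1))}$.

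The crux is therefore the variational statement $f_{\mathrm{bd}}<f_{\max}$. For $\beta>\beta_c$ the phase-transition description of Section 2 identifies the global maximisers of $f$ as exactly the three symmetric points $\mathfrak{a}_1(\beta),\mathfrak{a}_2(\beta),\mathfrak{a}_3(\beta)$, each with one large component and two equal small ones (consistent with Lemma \ref{lem1}, $\mathfrak{a}_0$ no longer being global once $\beta>\beta_c$). None of these lies in the compact set $B$: $\mathfrak{a}_1$ lies on neither equality surface, while $\mathfrak{a}_2$ and $\mathfrak{a}_3$ lie on an equality surface but violate its ordering constraint, their large component being the one forced to be small in $B$. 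Since $f$ is continuous and $B$ compact, $f_{\mathrm{bd}}$ is attained off the maximiser set, so $f_{\max}-f_{\mathrm{bd}}=:\gamma>0$. Hence $\Phi\le\Phi_{S'}\le\mathrm{poly}(N)\,e^{-\gamma N}$, and Theorem \ref{conductance} yields $\Gamma(T_\beta)\le 2\Phi$, an exponentially small spectral gap, i.e.\ torpid mixing. I expect the main obstacle to be precisely this strict gap $f_{\mathrm{bd}}<f_{\max}$: it rests on the exact location of the maximisers of $f$ for $\beta>\beta_c$, where Lemma \ref{lem1} and the first-order transition enter, whereas the remaining steps are bookkeeping with \eqref{stirling} and counting the $O(N)$ lattice points near the tie surfaces.
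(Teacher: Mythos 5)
Your proposal is correct, but it runs the conductance argument with a genuinely different cut than the paper does. The paper's bad set is a small ball in macrostate space around \emph{one} global maximiser of $f$: it takes $\mathfrak{B}_{2\vep}=\{\sigma: m_N(\sigma)\in B_{2\vep}(\mathfrak{a}_1(\beta))\}$, gets $\tfrac14\le\pi_\beta(\mathfrak{B}_{2\vep})\le\tfrac13$ from the three-fold concentration of $m_N$, and bounds the numerator by observing that any escaping transition must start in the annulus $\mathfrak{B}_{2\vep}\setminus\mathfrak{B}_{\vep}$ (a single spin flip moves $m_N$ by $O(1/N)$), whose mass is exponentially negligible relative to the ball because $f$ decreases radially away from its maximiser; the barrier is thus a \emph{local} property of $f$ near $\mathfrak{a}_1$. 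You instead cut along the colour-symmetry (tie) surfaces, taking the entire majority-colour-$1$ basin $S'$: your mass condition is then exact and immediate ($\pi_\beta(S')\le\tfrac13$ by disjointness and symmetry alone — you do not even need the ties to carry vanishing mass), and your barrier is the \emph{global} variational gap $f_{\max}-\max_B f>0$, which requires knowing that for $\beta>\beta_c$ the only global maximisers of $f$ are $\mathfrak{a}_1,\mathfrak{a}_2,\mathfrak{a}_3$ and that none of them lies on the tie surfaces; your appeal to the Section 2 phase-transition description and Lemma \ref{lem1} for this is at the same level of rigour as the paper's own use of these facts. (One cosmetic caution: the tie surfaces reach the boundary of the simplex, e.g. $(\tfrac12,\tfrac12,0)$, where the $o(1)$ in \eqref{stirling} is not obviously uniform; for your numerator upper bound it is cleaner to use the exact entropy bound $\binom{N}{Nc_1,Nc_2,Nc_3}\le\exp\bigl(-N\sum_i c_i\log c_i\bigr)$.) As for what each approach buys: yours is the more classical Gore--Jerrum/Bhatnagar--Randall-style symmetry cut and yields an explicit rate $\gamma=f_{\max}-f_{\mathrm{bd}}$, while the paper's ball-and-annulus construction needs only local information near a maximiser — which is precisely why the paper can recycle it later, around the merely \emph{local} maximiser $\mathfrak{a}_0$ (where a majority-region cut would be useless), in the conductance estimate of Proposition \ref{condest} and the proof of Theorem \ref{Centraltheorem} for the EES chain.
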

\begin{proof}
Take the macro-state $\mathfrak{a}_1:=\mathfrak{a}_1(\beta)$, i.e. the maximum point $a=(a_1,a_2,a_3)$ of $f$, where $a_1>a_2=a_3$. This point exists according to Lemma \ref{lem1} and because we are in the low temperature region. Since $\mathfrak{a}_1$
is a maximum of $f$, there is $\vep>0$ such that $f$ is decreasing on the ball of radius $2\vep$ centered in $\mathfrak{a}_1$, $B_{2\vep}(\mathfrak{a}_1)$, when we walk from the center to the boundary on a straight line. $\mathfrak{a}_1$ is one of the three points in which the distribution of $m_N$ concentrates for large $N$ and that are equally likely. Thus for
$$\mathfrak{B}_{2\vep}:=\{\sigma: m_N(\sigma)\in B_{2\vep}(\mathfrak{a}_1)\}
$$
we obtain that $$\frac 14 \le \pi_\beta(m_N(\sigma)\in \mathfrak{B}_{2\vep})\le \frac 13,$$ when $N$ is large enough and $\vep>0$ is fixed and small enough. Moreover, due to the exponential structure of $\pi_\beta$, i.e.
$$
\pi_\beta(m_N(\sigma)=c)=\frac{\exp(N(f(c)+\Delta(c)))}{N Z_\beta},
$$
and the behavior of $f$ on $B_{2\vep}(\mathfrak{a}_1)$ (on $B_{2\vep}(\mathfrak{a}_1)$, the function $f$ decreases like a multiple of the square of the two norms)
we obtain that
$$\frac{\pi_\beta(m_N(\sigma) \in \mathfrak{B}_{2\vep}\setminus \mathfrak{B}_{\vep})}{\pi_\beta(m_N(\sigma) \in \mathfrak{B}_{2\vep})}\le e^{-c'N}$$
for a suitably chosen constant $c'>0$. But this implies that the set $\mathfrak{B}_{2\vep}$ constitutes a ''bad cut''. Indeed with the notation of the previous theorem we see that
\begin{eqnarray*}
\Phi \le \Phi_{\mathfrak{B}_{2\vep}}&=& \frac{\sum_{x \in \mathfrak{B}_{2\vep}, y\notin \mathfrak{B}_{2\vep} }\pi_\beta(x)T_\beta(x,y)}{\pi_\beta(\mathfrak{B}_{2\vep})}\\
&\le& \frac{\sum_{x \in \mathfrak{B}_{2\vep}\setminus \mathfrak{B}_{\vep}, y\notin \mathfrak{B}_{2\vep} }\pi_\beta(x)T_\beta(x,y)}{\pi_\beta(B_{2\vep})}\\
&\le& \frac{\pi_\beta(m_N(\sigma)\in \mathfrak{B}_{2\vep}\setminus \mathfrak{B}_{\vep})}{\pi_\beta(m_N(\sigma)\in\mathfrak{B}_{2\vep})}\le e^{-c'N}
\end{eqnarray*}
Thus $T_\beta$ mixes slowly, when $\beta >\beta_c$.
\end{proof}
As a consequence, if EES is fast on the low temperature Potts model, this will have to be caused by the equi-energy steps. However, the following important observation is that we will not be able to switch between two states that are at very different distances from the center mode $\mathfrak{a}_0:=(\frac 13, \frac 13, \frac 13)$ by an equi-energy step. More precisely:
\begin{lemma}\label{lemsigmatau}
For each $\vep> 0$ and each $\vep>\delta > 0$ there is a number of spins $N_0$
such that for all $N > N_0$ and whenever $\sigma$ and $\tau$ satisfy
$||m_N(\sigma)- \mathfrak{a}_0||_1< \delta$ and $||m_N(\tau)- \mathfrak{a}_0||_1 > \vep$ (where $||\cdot||_1$ denotes the 1-norm on $\mathcal{C}$)
then
$$
Q_M(\sigma, \tau ) = 0.
$$
Here $Q_M$ is defined in \eqref{Qi}.
\end{lemma}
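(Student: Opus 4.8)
The plan is to reduce the claim to a statement about energy bands. Inspecting the definition \eqref{Qi}, $Q_M(\sigma,\tau)$ carries the indicator $\ind_{\tau\in B_{n,k}}$, where $k$ is the band index fixed by $h_{k-1}<H(\sigma)\le h_k$; since every $\pi_{\beta_i}$ is strictly positive, the remaining $\min\{1,\cdot\}$ factor never vanishes. Hence $Q_M(\sigma,\tau)>0$ forces $\sigma$ and $\tau$ into the same band, i.e.\ $|H(\sigma)-H(\tau)|$ is at most the band width. With the choice $M=dN$ of equidistant levels spanning $[\inf H,\sup H]=[\tfrac N6,\tfrac N2]$ from \eqref{energy}, this width equals $(\tfrac N2-\tfrac N6)/(dN)=\tfrac1{3d}$, a \emph{constant} independent of $N$. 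So it suffices to show that the two hypotheses force $|H(\sigma)-H(\tau)|$ to grow linearly in $N$.

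First I would rewrite the energy through the order parameter. For $c=m_N(\sigma)$ one has $H(\sigma)=\tfrac N2\sum_{i=1}^3 c_i^2=\tfrac N2\|c\|_2^2$, and since $c-\mathfrak{a}_0$ is orthogonal to $\mathfrak{a}_0$ on the simplex $\mathcal{C}$ (their inner product is $\tfrac13\sum_i(c_i-\tfrac13)=0$), this collapses to
$$H(\sigma)=\frac N6+\frac N2\,\|m_N(\sigma)-\mathfrak{a}_0\|_2^2,$$
and likewise for $\tau$. Thus, up to the additive constant $\tfrac N6$ (the global minimum of $H$, attained only at $\mathfrak{a}_0$) and the factor $\tfrac N2$, the energy is exactly the squared Euclidean distance of the order parameter from the centre.

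Next I would insert the two hypotheses. From $\|v\|_2\le\|v\|_1$ on $\R^3$ we get $\|m_N(\sigma)-\mathfrak{a}_0\|_2^2<\delta^2$, hence $H(\sigma)<\tfrac N6+\tfrac N2\delta^2$; from the reverse inequality $\|v\|_1\le\sqrt3\,\|v\|_2$ we get $\|m_N(\tau)-\mathfrak{a}_0\|_2^2>\tfrac{\vep^2}3$, hence $H(\tau)>\tfrac N6+\tfrac N6\vep^2$. Subtracting yields $H(\tau)-H(\sigma)>\tfrac N6(\vep^2-3\delta^2)$, a strictly positive multiple of $N$ as soon as $\delta$ is small relative to $\vep$. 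Choosing $N_0=N_0(\vep,\delta,d)$ so that this exceeds the band width $\tfrac1{3d}$ then forces $\sigma$ and $\tau$ into distinct bands and gives $Q_M(\sigma,\tau)=0$.

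The point requiring care --- and the only genuinely non-formal step --- is the passage from the $\ell^1$-hypotheses to the $\ell^2$-energy, since the two norms are not proportional on the hyperplane $\{\sum_i v_i=0\}$. The crude constants above need $\vep>\sqrt3\,\delta$; replacing them by the extremal ratios of $\|v\|_2/\|v\|_1$ on this hyperplane (namely $\tfrac{\sqrt6}4$ in the direction towards a corner and $\tfrac1{\sqrt2}$ along an edge) relaxes this to $\vep>\tfrac2{\sqrt3}\delta$, which is sharp: a $\sigma$ pointing along an edge and a $\tau$ pointing towards a corner can have exactly equal energy when $\delta=\tfrac{\sqrt3}2\vep$. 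Since the lemma is invoked with $\delta$ much smaller than $\vep$, this causes no difficulty in the sequel, but it shows that the separation of energies --- not any cancellation in the acceptance ratio --- is what is really driving the result.
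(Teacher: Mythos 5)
Your proposal is correct and follows essentially the same route as the paper's own proof: positivity of $Q_M(\sigma,\tau)$ in \eqref{Qi} forces $\sigma$ and $\tau$ into a common energy band; the choice $M=dN$ of equidistant levels on $[\tfrac N6,\tfrac N2]$ (cf.\ \eqref{energy}) makes the band width equal to the constant $\tfrac 1{3d}$; and the hypotheses on $m_N$ force $|H_N(\sigma)-H_N(\tau)|$ to grow linearly in $N$, which is incompatible with common band membership once $N$ is large.

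The one place where you go beyond the paper is the norm-comparison step, and your extra care there is not pedantry. The paper closes its proof with the remark that the $1$-norm and $2$-norm are equivalent on $\mathcal{C}$, but equivalence alone only yields $\|m_N(\sigma)-\mathfrak{a}_0\|_2<C\delta$ and $\|m_N(\tau)-\mathfrak{a}_0\|_2>c\vep$ with two \emph{different} constants $c<C$, so the resulting energy gap $\tfrac N2\left(c^2\vep^2-C^2\delta^2\right)$ is positive only when $\vep/\delta>C/c$, not for every $\delta<\vep$ as the lemma asserts. Your computation of the extremal ratios $\tfrac{\sqrt6}{4}\le\|v\|_2/\|v\|_1\le\tfrac1{\sqrt2}$ on the hyperplane $\sum_i v_i=0$, together with the edge-versus-corner example in which the two energies coincide when $\delta=\tfrac{\sqrt3}{2}\vep$, shows that this is intrinsic: an energy-separation argument can only work under the stronger hypothesis $\delta<\tfrac{\sqrt3}{2}\vep$, and for $\tfrac{\sqrt3}{2}\vep\le\delta<\vep$ the claimed conclusion cannot be salvaged by any appeal to norm equivalence, since order parameters at $1$-distances $\delta$ and $\vep$ from $\mathfrak{a}_0$ may then produce energies lying in one and the same band. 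So the lemma's hypothesis should really read $\delta<\tfrac{\sqrt3}{2}\vep$ (or simply ``$\delta$ sufficiently small relative to $\vep$''). As you observe, this does not affect the paper's use of the lemma, because in the construction preceding Proposition \ref{condest} the radii $\delta<\delta'<\delta''<\vep$ are free parameters and can be chosen with the required ratios; but your version, with the explicit threshold, is the statement that is actually true and actually needed.
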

\begin{proof}
The proof mainly shows that under the given conditions the energies of $\sigma$ and $\tau$ are too far apart from each other. Indeed, observe that $Q_M(\sigma, \tau ) >0$ requires $\sigma$ and $\tau$ to be in the same energy band. Thus there is $i\in \{0, \ldots M-1\}$ such that $h_i < H_N(\sigma), H_N(\tau) \le h_{i+1}$.
Now each $\sigma_{\mathfrak{a}_0}$ with $m_N(\sigma_{\mathfrak{a}_0}) =
{\mathfrak{a}_0}$ minimizes the energy to $H_N(\sigma_{\mathfrak{a}_0})=\frac N2 \times 3 \times \frac 19= \frac N6$. On the other hand, the states where all spins point into the same direction have maximal energy $\frac N 2$
cf. \eqref{energy}.

Thus, recalling that $M=dN$, the width of the energy bands is
$$
h_{i+1}-h_i = \frac 1 M (h_M - h_0)= \frac{\frac N2-\frac N6}M= \frac {1}{3d}.$$
Therefore, $\sigma$ and $\tau$ are only in the same energy band, if
$$
|H_N(\sigma)-H_N(\tau)|= \frac N2 \left(||m_N(\sigma)||_2^2-||m_N(\tau)||_2^2\right) \le \frac{1}{3d},
$$
i.e. if the two norms $||m_N(\sigma)||_2$ and $||m_N(\tau)||_2$ satisfy
$$
\left|\, ||m_N(\sigma)||_2^2-||m_N(\tau)||_2^2 \,\right| \le \frac{1}{6dN}.
$$
Since 1-norm and 2-norm are equivalent on $\mathcal{C}$ this proves the assertion.
%
%
\end{proof}
We will again use a conductance argument to prove Theorem \ref{Centraltheorem}. In order to prepare it
let us lift the balls $\mathfrak{B}_\vep$ to $\Omega^{M+1}$:
For $\vep>0$ let
$$
\mathfrak{B}^{M+1}_\vep:=\{ x\in \O^{M+1}: m_N(x_{M}) \in B_\vep(\mathfrak{a}_0)\}.
$$
From now on we will assume that $\beta_c < \beta <3$. Recall that then still $\mathfrak{a}_0$ is a local (but not a global) maximum of the function $f$. Let us fix $\vep >0$ so small, that still $f$ is decreasing on
$B_\vep(\mathfrak{a}_0)$ when we move away from the center (in particular, $\mathfrak{a}_0$ is the only mode of $\pi_\beta$ on $B_\vep(\mathfrak{a}_0)$). Moreover, let us fix $\delta < \vep$ and $N_0$ so large that even with two equi-energy steps and a Metropolis step in between, a $\sigma$ with $||m_N(\sigma)- \mathfrak{a}_0||_1 > \vep$ cannot be reached from a $\tau$ with $||m_N(\tau)- \mathfrak{a}_0||_1 < \delta$.

This can be constructed as in Lemma \ref{lemsigmatau}. Indeed, we will need the following: For $\delta'>0$ given with $\delta<\delta'<\vep$ there is $N_1$, such that if $N \ge N_1$ an equi-energy jump started in $m_N\in B_\delta(\mathfrak{a}_0)$ will not leave $B_{\delta'}(\mathfrak{a}_0)$. The subsequent Metropolis step can only increase the 1-distance of $m_N$ to $\mathfrak{a}_0$ by at most $1/N$, hence $m_N$ is still in, say, $B_{\delta''}(\mathfrak{a}_0)$, for some $\delta'< \delta''<\vep$. Finally, there is $N_2$, such that if $N \ge N_2$ an equi-energy jump started in $m_N\in B_{\delta''}(\mathfrak{a}_0)$ will not leave $B_{\vep}(\mathfrak{a}_0)$. We will from now on always take $N \ge N_0 := \max\{N_1, N_2\}$.

All this is necessary because the chains $\mathcal{R}$ and $\mathcal{S}$ possibly comprise two such jumps.
Next we prove
\begin{lemma}
Let $\beta_c < \beta <3$ and $\vep>\delta>0$ and $N_0$ be chosen as above. Then, there exists $c''>0$ such that for $\tilde \pi(x):=\prod_{i=0}^M \pi_{\beta_i}(x_i)$, $x\in \O^{M+1}$, we have
$$
\frac{\pi(\mathfrak{B}^{M+1}_\vep\setminus\mathfrak{B}^{M+1}_\delta)}{\pi(\mathfrak{B}^{M+1}_\vep)}\le e^{-c'' N}.
$$
\end{lemma}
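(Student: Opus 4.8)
The plan is to collapse the ratio to a single-coordinate estimate and then feed in the Laplace-type representation \eqref{stirling}. First I would observe that both $\mathfrak{B}^{M+1}_\vep$ and $\mathfrak{B}^{M+1}_\delta$ constrain only the last coordinate $x_M$, so they are cylinder sets over the $M$-th factor of $\O^{M+1}$. Since $\tilde\pi=\prod_{i=0}^M\pi_{\beta_i}$ is a product measure, and it coincides with the measure $\pi$ from the proof of Theorem \ref{convergence result} once the matrix component is fixed at $M_0$, every factor with index $i<M$ integrates to $1$ and cancels in the quotient. Because $\beta_M=\beta$, this leaves
$$
\frac{\pi(\mathfrak{B}^{M+1}_\vep\setminus\mathfrak{B}^{M+1}_\delta)}{\pi(\mathfrak{B}^{M+1}_\vep)}
=\frac{\pi_\beta\bigl(m_N\in B_\vep(\mathfrak{a}_0)\setminus B_\delta(\mathfrak{a}_0)\bigr)}{\pi_\beta\bigl(m_N\in B_\vep(\mathfrak{a}_0)\bigr)},
$$
which is the same annulus-versus-ball comparison seen in Proposition \ref{slowmixPotts}, but now centered at the local mode $\mathfrak{a}_0$ rather than at $\mathfrak{a}_1$.

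Next I would invoke \eqref{stirling}, which represents each atom as $\pi_\beta(m_N=c)=\frac{1}{NZ_\beta}\exp\bigl(N(f(c)+\Delta(c))\bigr)$ with $\Delta(c)=o(1)$ uniformly over lattice points $c$ in the compact region $\overline{B_\vep(\mathfrak{a}_0)}$. The decisive structural input is Lemma \ref{lemma loc min}: since $\beta<3$ the Hessian of $f$ at $\mathfrak{a}_0$ is negative definite along $\mathcal{C}$ (this is exactly the sign $h''(0)=-(6-2\beta)(a^2+a+1)<0$ computed there), and $\vep$ was fixed so small that $\mathfrak{a}_0$ is the \emph{unique} maximizer of $f$ on $B_\vep(\mathfrak{a}_0)$. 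By compactness of $\overline{B_\vep(\mathfrak{a}_0)}\setminus B_\delta(\mathfrak{a}_0)$ this produces a strictly positive gap
$$
g:=f(\mathfrak{a}_0)-\max_{c\in\overline{B_\vep(\mathfrak{a}_0)}\setminus B_\delta(\mathfrak{a}_0)}f(c)>0.
$$

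With this gap the estimate is routine. I would bound the denominator below by the single central term, $\pi_\beta(m_N\in B_\vep(\mathfrak{a}_0))\ge\pi_\beta(m_N=\mathfrak{a}_0)=\frac{1}{NZ_\beta}\exp(N(f(\mathfrak{a}_0)+o(1)))$, which is admissible because $N$ is a multiple of $3$ so $\mathfrak{a}_0$ is an attainable value of $m_N$. For the numerator I would bound each of the at most $O(N^2)$ lattice points $c$ in the annulus by $\frac{1}{NZ_\beta}\exp(N(f(\mathfrak{a}_0)-g+o(1)))$. Dividing, the prefactor $\frac{1}{NZ_\beta}$ and the factor $e^{Nf(\mathfrak{a}_0)}$ cancel, the polynomial lattice count is absorbed, and one is left with
$$
\frac{\pi_\beta(m_N\in B_\vep(\mathfrak{a}_0)\setminus B_\delta(\mathfrak{a}_0))}{\pi_\beta(m_N\in B_\vep(\mathfrak{a}_0))}\le\mathrm{poly}(N)\,e^{-N(g-o(1))}\le e^{-c''N}
$$
for any $c''<g$ and all $N$ large; taking $c''=g/2$ closes the argument.

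The genuinely load-bearing point is the strict positivity of $g$, i.e.\ that $\mathfrak{a}_0$ is not merely a critical point but the unique mode of $f$ throughout $B_\vep(\mathfrak{a}_0)$; this is precisely where $\beta<3$ (through Lemma \ref{lemma loc min}) and the earlier choice of $\vep$ enter, and without them the bound would fail. The only other step needing a line of justification is the uniformity of the Stirling error $\Delta(c)=o(1)$ across the annulus, which holds because $B_\vep(\mathfrak{a}_0)$ lies in the interior of $\mathcal{C}$, bounded away from the boundary where the terms $c_i\log c_i$ degenerate; the factorization in the first step and the polynomial lattice count are then entirely routine.
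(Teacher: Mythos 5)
Your proof is correct and follows essentially the same route as the paper's: the paper likewise reduces the bound to the last coordinate via the product structure of $\tilde \pi$, and then applies the Laplace-type estimate from \eqref{stirling} around the local maximum $\mathfrak{a}_0$ (Lemma \ref{lemma loc min}), referring to the argument of Proposition \ref{slowmixPotts} for the single-coordinate annulus-versus-ball bound. Your write-up simply makes explicit what the paper leaves implicit --- the positive gap $g$, the polynomial lattice-point count, and the uniformity of the Stirling error on a region interior to $\mathcal{C}$.
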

\begin{proof}
According to our above analysis $\mathfrak{a}_0$ is a local (but not a global) maximum point of the distribution of $m_N$ under $\pi_\beta=\pi_{\beta_M}=:\pi_M$, if $\beta_c < \beta <3$. Therefore
$$
\frac{\pi_M(\{\sigma: m_N(\sigma) \in \mathfrak{B}_\vep(\mathfrak{a}_0) \setminus\mathfrak{B}_\delta(\mathfrak{a}_0)\})}{\pi(\{\sigma: m_N(\sigma) \in \mathfrak{B}_\vep (\mathfrak{a}_1)\})}\le e^{-c'' N}
$$
for $c''>0$ chosen appropriately. The proof of this statement follows the concepts of the proof of Proposition \ref{slowmixPotts}.
This fact is easily transferred to the measure $\tilde \pi$ due to its product structure.
\end{proof}
With the help of this lemma we will be able to establish that the set $\mathfrak{B}^{M+1}_\vep$ constitutes a ''bad cut'' for the Markov chain $\mathcal{S}$.
\begin{proposition}\label{condest}
Consider the Markov chain $\mathcal{S}$ on the state space $\Omega_{EES}:= \Omega^{M+1} \times \mathfrak{M}$ where again $\mathfrak{M}$ is the space of all $(M+1) \times 3^N$ matrices. Here the first coordinate keeps record of the current state of $M+1$ chains, while the second coordinate tracks the filling of the matrix $\mathcal{M}$.

If $\beta_c < \beta <3$ and the second coordinate of $\mathcal{S}$ is equal to $M_0$ its conductance $\Phi(\mathcal{S})$ satisfies
$$
\Phi(\mathcal{S})  \le e^{-c'' N}
$$
for some $c''>0$, if $N$ is large enough.
\end{proposition}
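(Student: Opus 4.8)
The plan is to exhibit a single ``bad cut'' realizing the conductance estimate, exactly in the spirit of Proposition \ref{slowmixPotts} but now lifted to $\Omega^{M+1}$. First I would observe that once the second coordinate of $\mathcal{S}$ reaches $M_0$ it never changes, since states are only ever added to $\mathcal{M}$; hence the slice $\Omega^{M+1}\times\{M_0\}$ is absorbing, on it $\mathcal{S}$ acts as $\mathcal{R}$, and (as in the proof of Theorem \ref{convergence result}) it is reversible with respect to $\tilde\pi(x)=\prod_{i=0}^M\pi_{\beta_i}(x_i)$. It therefore suffices to bound the conductance of $\mathcal{R}$ with respect to $\tilde\pi$, and I would choose the test set $S':=\mathfrak{B}^{M+1}_\vep$. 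Because $\mathfrak{a}_0$ is only a local, not a global, maximum of $f$ when $\beta_c<\beta<3$, the mass $\tilde\pi(\mathfrak{B}^{M+1}_\vep)=\pi_{\beta_M}(m_N\in B_\vep(\mathfrak{a}_0))$ is exponentially small and in particular at most $\tfrac12$, so $S'$ is admissible in the minimization defining $\Phi$.

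The heart of the argument is to show that the $\tilde\pi$-flow leaving $\mathfrak{B}^{M+1}_\vep$ in one step of $\mathcal{R}$ can only originate from the annulus $\mathfrak{B}^{M+1}_\vep\setminus\mathfrak{B}^{M+1}_\delta$. A single application of $\mathcal{R}=\frac1{(M+1)^3}\sum_{j,k,l}\mathcal{Q}_j\mathcal{T}_k\mathcal{Q}_l$ changes the $M$-th coordinate only through the summands with $j=M$, $k=M$ or $l=M$; every other summand leaves $x_M$ untouched and thus cannot move $x$ out of $\mathfrak{B}^{M+1}_\vep$. For the summands that do act on coordinate $M$ the worst case is $j=k=l=M$, i.e.\ an equi-energy jump, a Metropolis step, and a second equi-energy jump applied in succession to $x_M$. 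By the three containment estimates fixed before the proposition (the jump $B_\delta\to B_{\delta'}$ via Lemma \ref{lemsigmatau}, the Metropolis increment of at most $1/N$ giving $B_{\delta'}\to B_{\delta''}$, and the jump $B_{\delta''}\to B_\vep$), any $x$ with $m_N(x_M)\in B_\delta(\mathfrak{a}_0)$ is sent to a $y$ with $m_N(y_M)\in B_\vep(\mathfrak{a}_0)$. Consequently no probability escapes from $\mathfrak{B}^{M+1}_\delta$, and since $\sum_y\mathcal{R}(x,y)=1$,
$$
\sum_{x\in\mathfrak{B}^{M+1}_\vep,\,y\notin\mathfrak{B}^{M+1}_\vep}\tilde\pi(x)\mathcal{R}(x,y)
\le\tilde\pi\!\left(\mathfrak{B}^{M+1}_\vep\setminus\mathfrak{B}^{M+1}_\delta\right).
$$

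Dividing by $\tilde\pi(\mathfrak{B}^{M+1}_\vep)$ and invoking the preceding lemma then gives
$$
\Phi(\mathcal{S})\le\Phi_{\mathfrak{B}^{M+1}_\vep}
\le\frac{\tilde\pi(\mathfrak{B}^{M+1}_\vep\setminus\mathfrak{B}^{M+1}_\delta)}{\tilde\pi(\mathfrak{B}^{M+1}_\vep)}\le e^{-c''N},
$$
which is the asserted bound. The one step requiring genuine care is the flow estimate: I must make sure that the composition of two equi-energy jumps with an intervening Metropolis move really cannot transport $m_N(x_M)$ across the annulus, for \emph{every} combination of coordinates $(j,k,l)$ touched by $\mathcal{R}$, and that $N_0=\max\{N_1,N_2\}$ fixed earlier is large enough for all three containment estimates to hold simultaneously. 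The remaining ingredients — the exponential-concentration estimate for $\tilde\pi$ and the formal passage from $\mathcal{S}$ on the absorbing slice to $\mathcal{R}$ with invariant measure $\tilde\pi$ — are routine.
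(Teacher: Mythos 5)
Your proposal is correct and follows essentially the same route as the paper's proof: the same bad cut $\mathfrak{B}^{M+1}_\vep$ (restricted to the absorbing slice where the second coordinate equals $M_0$), the same observation via Lemma \ref{lemsigmatau} and the construction of $\delta$, $\delta''$, $N_0$ that outgoing flow can only originate in the annulus $\mathfrak{B}^{M+1}_\vep\setminus\mathfrak{B}^{M+1}_\delta$, and the same application of the preceding lemma to bound the resulting ratio by $e^{-c''N}$. Your explicit decomposition of $\mathcal{R}$ into the summands $\mathcal{Q}_j\mathcal{T}_k\mathcal{Q}_l$ with $j$, $k$, or $l$ equal to $M$ merely spells out what the paper compresses into the phrase that an equi-energy jump, a Metropolis move, and another equi-energy jump starting in $B_\delta(\mathfrak{a}_0)$ cannot leave $B_\vep(\mathfrak{a}_0)$.
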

\begin{proof}
Since $\beta_c < \beta$ clearly $\pi_M(\{\sigma: m_N(x_{M}) \in B_\vep(\mathfrak{a}_0)\})< \frac 12$, when $N$ is large enough. Thus also
$$\pi(\Omega^M \times  \{\sigma: m_N(x_{M}) \in B_\vep(\mathfrak{a}_0)\} \times M_0)=
\pi (\mathfrak{B}_\vep^{M+1} \times M_0) < \frac 12
$$
for $N$ large enough.
Thus
\begin{eqnarray*}
\Phi(\mathcal{S}) &\le& \Phi_{\mathfrak{B}_\vep^{M+1} \times M_0}(\mathcal{S})= \frac{\sum_{{\tilde \sigma \in \mathfrak{B}_\vep^{M+1} \times M_0}\atop{\tilde \tau \notin \mathfrak{B}_\vep^{M+1} \times M_0}}\pi(\tilde{\sigma})\mathcal{S}(\tilde{\sigma}, \tilde{\tau})}{\pi(\mathfrak{B}_\vep^{M+1} \times M_0)}\\
&\le& \frac{\sum_{{\tilde \sigma \in (\mathfrak{B}_\vep^{M+1} \times M_0)\setminus
(\mathfrak{B}_\delta^{M+1} \times M_0) }\atop{\tilde \tau \notin \mathfrak{B}_\vep^{M+1} \times M_0}}\pi(\tilde{\sigma})\mathcal{S}(\tilde{\sigma}, \tilde{\tau})}{\pi(\mathfrak{B}_\vep^{M+1} \times M_0)}\\
&\le& \frac{\sum_{{\tilde \sigma \in (\mathfrak{B}_\vep^{M+1} \times M_0)\setminus
(\mathfrak{B}_\delta^{M+1} \times M_0) }\atop{\tilde \tau \notin \mathfrak{B}_\vep^{M+1} \times M_0}}\pi(\tilde{\sigma})}{\pi(\mathfrak{B}_\vep^{M+1} \times M_0)}\\
&=& \frac{\pi\left((\mathfrak{B}^{M+1}_\vep \times M_0)\setminus(\mathfrak{B}^{M+1}_\delta\times M_0)\right)}{\pi(\mathfrak{B}^{M+1}_\vep\times M_0)}\\
&=& \frac{\tilde \pi(\mathfrak{B}^{M+1}_\vep\setminus\mathfrak{B}^{M+1}_\delta)}{\tilde \pi(\mathfrak{B}^{M+1}_\vep)}\le e^{-c'' N}.
\end{eqnarray*}
Here we used, of course, our previous estimates together with our construction of $\delta$. Starting in $B_\delta(\mathfrak{a}_0)$ the combination of an equi-energy jump, a Metropolis move, and another equi-energy jump will not leave $B_{\vep}(\mathfrak{a}_0)$ according to Lemma \ref{lemsigmatau} and the construction of $\delta$ and $N_0$.
\end{proof}
Now we have prepared everything to prove Theorem \ref{Centraltheorem}.
\begin{proof}[Proof of Theorem \ref{Centraltheorem}]
Just note, that conditioned on the event that the second coordinate of $\mathcal{S}$ is in $M_0$ (which it cannot leave anymore), $\mathcal{S}$ is reversible with respect to $\pi$. Hence we can apply Theorem \ref{conductance} together with the conductance estimate of Proposition \ref{condest} to obtain the desired result.
\end{proof}
\begin{remark}
Note that a similar proof would not work in the Curie-Weiss model, because there the ''center point'' $(1/2, 1/2)$, i.e. the $\sigma$'s where both directions for the spins occur equally often, is always a local minimum of the Gibbs measure at low temperatures.

Moreover, note that we could adapt the proof to different values of $q\ge 3$ as mentioned above.

Finally, a similar argument should work for ''more disordered'' models, as Potts models on sufficiently dense Erd\"os-R\'enyi graphs, as e.g. analyzed in \cite{KLS19a} for $q=2$.
\end{remark}

\begin{remark}
We have just seen that EES mixes slowly on the 3-state Potts model at $\beta_c<\beta<3$, {\it even when
we know the energies of the entire set of states}. We also argued that at high temperatures these temperature steps are not necessary, because already the Metropolis chain itself converges rapidly. However, one may doubt that there are reasonable models, at all, in which EES converges rapidly while the Metropolis algorithm does not. The point is, that, if we have not filled $\mathcal{M}$ almost entirely, a temperature jump may provide the desired tunneling effect, but to a rather unfavorable point of the target distribution.
\end{remark}


\begin{thebibliography}{}

\bibitem[Andrieu et~al., 2008]{AJDD}
Andrieu, C., Jasra, A., Doucet, A., and Del~Moral, P. (2008).
\newblock A note on convergence of the equi-energy sampler.
\newblock {\em Stoch. Anal. Appl.}, 26(2):298--312.

\bibitem[Baragatti et~al., 2013]{Baragatti_etal}
Baragatti, M., Grimaud, A., and Pommeret, D. (2013).
\newblock Parallel tempering with equi-energy moves.
\newblock {\em Stat. Comput.}, 23(3):323--339.

\bibitem[Berg, 2000]{B00}
Berg, B.~A. (2000).
\newblock Introduction to multicanonical {M}onte {C}arlo simulations.
\newblock In {\em Monte {C}arlo methods ({T}oronto, {ON}, 1998)}, volume~26 of
  {\em Fields Inst. Commun.}, pages 1--24. Amer. Math. Soc., Providence, RI.

\bibitem[Berg and Neuhaus, 1992]{BN92}
Berg, B.~A. and Neuhaus, T. (1992).
\newblock Multicanonical ensemble: A new approach to simulate first-order phase
  transitions.
\newblock {\em Phys. Rev. Lett.}, 68:9--12.

\bibitem[Bhatnagar and Randall, 2004]{BhatnagarRandallTorpidMixingPotts}
Bhatnagar, N. and Randall, D. (2004).
\newblock Torpid mixing of simulated tempering on the {P}otts model.
\newblock In {\em Proceedings of the {F}ifteenth {A}nnual {ACM}-{SIAM}
  {S}ymposium on {D}iscrete {A}lgorithms}, pages 478--487 (electronic), New
  York. ACM.

\bibitem[Bhatnagar and Randall, 2016]{BR16}
Bhatnagar, N. and Randall, D. (2016).
\newblock Simulated tempering and swapping on mean-field models.
\newblock {\em J. Stat. Phys.}, 164(3):495--530.

\bibitem[Cuff et~al., 2012]{Cuffetal}
Cuff, P., Ding, J., Louidor, O., Lubetzky, E., Peres, Y., and Sly, A. (2012).
\newblock Glauber dynamics for the mean-field {P}otts model.
\newblock {\em J. Stat. Phys.}, 149(3):432--477.

\bibitem[Diaconis and Stroock, 1991]{DiaconisStrook_GeometricBoundsMC}
Diaconis, P. and Stroock, D. (1991).
\newblock {Geometric bounds for eigenvalues of Markov chains.}
\newblock {\em Ann. Appl. Probab.}, 1(1):36--61.

\bibitem[Doll et~al., 2018]{DDN18}
Doll, J., Dupuis, P., and Nyquist, P. (2018).
\newblock A large deviations analysis of certain qualitative properties of
  parallel tempering and infinite swapping algorithms.
\newblock {\em Appl. Math. Optim.}, 78(1):103--144.

\bibitem[Ebbers et~al., 2014]{EKLV}
Ebbers, M., Kn\"{o}pfel, H., L\"{o}we, M., and Vermet, F. (2014).
\newblock Mixing times for the swapping algorithm on the
  {B}lume-{E}mery-{G}riffiths model.
\newblock {\em Random Structures Algorithms}, 45(1):38--77.

\bibitem[Ebbers and L{\"o}we, 2009]{EbbersLoweREM}
Ebbers, M. and L{\"o}we, M. (2009).
\newblock {Torpid mixing of the swapping chain on some simple spin glass
  models.}
\newblock {\em Markov Process. Relat. Fields}, 15(1):59--80.

\bibitem[Ellis and Wang, 1990]{EllisWang}
Ellis, R.~S. and Wang, K. (1990).
\newblock Limit theorems for the empirical vector of the
  {C}urie-{W}eiss-{P}otts model.
\newblock {\em Stochastic Process. Appl.}, 35(1):59--79.

\bibitem[Geyer, 1991]{GeyerMCMCmaximumLikelihood}
Geyer, C.~J. (1991).
\newblock Markov chain monte carlo maximum likelihood.
\newblock In {\em Computing Science and Statistics: Proceedings of 23rd
  Symposium on the Interface Interface Foundation}, pages 156--163. Fairfax
  Station.

\bibitem[Geyer and Thompson, 1995]{GeyerThompsonAMCMC}
Geyer, C.~J. and Thompson, E.~A. (1995).
\newblock {Annealing Markov chain Monte Carlo with applications to ancestral
  inference.}
\newblock {\em J. Am. Stat. Assoc.}, 90(431):909--920.

\bibitem[Gore and Jerrum, 1999]{GJ99}
Gore, V.~K. and Jerrum, M.~R. (1999).
\newblock The {S}wendsen-{W}ang process does not always mix rapidly.
\newblock {\em J. Statist. Phys.}, 97(1-2):67--86.

\bibitem[H\"{a}ggstr\"{o}m, 2002]{haeggstroem}
H\"{a}ggstr\"{o}m, O. (2002).
\newblock {\em Finite {M}arkov chains and algorithmic applications}, volume~52
  of {\em London Mathematical Society Student Texts}.
\newblock Cambridge University Press, Cambridge.

\bibitem[Hua and Kou, 2011]{HuaKou}
Hua, X. and Kou, S.~C. (2011).
\newblock Convergence of the equi-energy sampler and its application to the
  {I}sing model.
\newblock {\em Statist. Sinica}, 21(4):1687--1711.

\bibitem[Hukushima and Nemoto, 1996]{exMC}
Hukushima, K. and Nemoto, K. (1996).
\newblock Exchange monte carlo method and application to spin glass
  simulations.
\newblock {\em Journal of the Physical Society of Japan}, 65(6):1604--1608.

\bibitem[Kabluchko et~al., 2019]{KLS19a}
Kabluchko, Z., L\"{o}we, M., and Schubert, K. (2019).
\newblock Fluctuations of the magnetization for {I}sing models on dense
  {E}rd\''os-{R}\'{e}nyi random graphs.
\newblock {\em J. Stat. Phys.}, 177(1):78--94.

\bibitem[{Kesten} and {Schonmann}, 1989]{Kestenschonman}
{Kesten}, H. and {Schonmann}, R.~H. (1989).
\newblock {Behavior in large dimensions of the Potts and Heisenberg models.}
\newblock {\em {Rev. Math. Phys.}}, 1(2-3):147--182.

\bibitem[Kou et~al., 2006]{KZW}
Kou, S.~C., Zhou, Q., and Wong, W.~H. (2006).
\newblock Equi-energy sampler with applications in statistical inference and
  statistical mechanics.
\newblock {\em Ann. Statist.}, 34(4):1581--1652.
\newblock With discussions and a rejoinder by the authors.

\bibitem[L\"{o}we and Meise, 2001]{LM01}
L\"{o}we, M. and Meise, C. (2001).
\newblock Note on the knapsack {M}arkov chain.
\newblock {\em Stochastic Process. Appl.}, 94(1):155--170.

\bibitem[L{\"o}we and Vermet, 2009]{loewe_vermet_swap}
L{\"o}we, M. and Vermet, F. (2009).
\newblock { The swapping algorithm for the Hopfield model with two patterns.}
\newblock {\em Stochastic Process. Appl.}, 119(10):3471--3493.

\bibitem[Madras, 1998]{madras}
Madras, N. (1998).
\newblock Umbrella sampling and simulated tempering.
\newblock In {\em Numerical methods for polymeric systems, Ed., S. G.
  Whittington, IMA Volume in Mathematics and Its Applications 102}, pages
  19--32, New York. Springer-Verlag.

\bibitem[Madras and Piccioni, 1999]{MadrasPiccioni}
Madras, N. and Piccioni, M. (1999).
\newblock Importance sampling for families of distributions.
\newblock {\em Ann. Appl. Prob.}, 9(4):1202--1225.

\bibitem[Madras and Zheng, 2003]{MadrasZhengCW}
Madras, N. and Zheng, Z. (2003).
\newblock {On the swapping algorithm.}
\newblock {\em Random Struct. Algorithms}, 22(1):66--97.

\bibitem[Marinari and Parisi, 1992]{marinari_parisi}
Marinari, E. and Parisi, G. (1992).
\newblock {Simulated tempering: A new Monte Carlo scheme.}
\newblock {\em Europhys Lett.}, 19(6):451--458.

\bibitem[Mossel and Sly, 2013]{MosselSly}
Mossel, E. and Sly, A. (2013).
\newblock Exact thresholds for {I}sing-{G}ibbs samplers on general graphs.
\newblock {\em Ann. Probab.}, 41(1):294--328.

\bibitem[Orlandini, 1998]{orlandini}
Orlandini, E. (1998).
\newblock Monte carlo study of polymer systems by multiple markov chain method.
\newblock In {\em Numerical methods for polymeric systems, Ed., S. G.
  Whittington, IMA Volume in Mathematics and Its Applications 102}, pages
  33--57, New York. Springer-Verlag.

\bibitem[Sinclair and Jerrum, 1989]{JerrumSinclair}
Sinclair, A. and Jerrum, M. (1989).
\newblock Approximate counting, uniform generation and rapidly mixing {M}arkov
  chains.
\newblock {\em Inform. and Comput.}, 82(1):93--133.

\bibitem[Torrie and Valleau, 1977]{umbrella}
Torrie, G. and Valleau, J. (1977).
\newblock Nonphysical sampling distributions in monte carlo free-energy
  estimation: Umbrella sampling.
\newblock {\em Journal of Computational Physics}, 23(2):187 -- 199.

\end{thebibliography}
\end{document}